\documentclass[11pt, final]{amsart}
\usepackage[margin=1.3in]{geometry}
\usepackage{amsmath,amsthm}
\usepackage{amssymb,amsxtra,verbatim}
\usepackage{latexsym}
\usepackage{color}

\usepackage{enumitem}

\usepackage{graphicx}
\usepackage{tikz}
\usetikzlibrary{matrix}

\usepackage[cmtip,all]{xy}

\usepackage{xypic}
\usepackage[notcite, notref]{showkeys}

\linespread{1.07}

\usepackage[
	hyperindex,
	pdftex,
	pdfdisplaydoctitle,
	pdfpagemode=UseNone,
	breaklinks=true,
	extension=pdf,
	bookmarks=false,
	plainpages=false,
	colorlinks,
	linkcolor=blue,
	citecolor=black,
]{hyperref}
\usepackage{aliascnt}

\theoremstyle{plain}
\newtheorem*{main-theorem}{Main Theorem}

\numberwithin{equation}{section}

\theoremstyle{plain}
\newtheorem{theorem}{Theorem}[section]
\newtheorem{prop}[theorem]{Proposition}
\newtheorem{corollary}[theorem]{Corollary}

\newtheorem{lemma}[theorem]{Lemma}

\theoremstyle{definition}

\newtheorem{definition}[theorem]{Definition}

\newtheorem{remark}[theorem]{Remark}

\def\ra{\rightarrow}

\def\co{\colon\thinspace} 



\DeclareMathOperator{\spec}{Spec}

\DeclareMathOperator{\Sym}{Sym}

\def\Hn1{\mathcal{H}_{n,1}}

\DeclareMathOperator{\HH}{\mathrm{H}}

\def\B{\mathcal{B}}
\def\C{\mathcal{C}}

\def\O{\mathcal{O}}

\newcommand\Mg[1]{\overline{\mathcal{M}}_{#1}}

\def\QQ{\mathbb{Q}}
\def\ZZ{\mathbb{Z}}

\def\PP{\mathbb{P}}
\def\ZZ{\mathbb{Z}}

\def\CC{\mathbb{C}}
\def\GG{\mathbb{G}}

\newcommand{\M}{\overline{M}}

\DeclareMathOperator\SL{SL}
\DeclareMathOperator\GL{GL}
\def\Grass{\operatorname{Grass}}

\def\CS{\operatorname{CoSyz}}

\def\Syz{\operatorname{Syz}}

\newcommand{\gitq}{/\hspace{-0.25pc}/}

\begin{document}

\title{Toward GIT stability of syzygies of canonical curves}
\begin{abstract}
We introduce the problem of GIT stability for syzygy points of canonical curves 
with a view toward a GIT construction of the canonical model of $\M_g$. 
As the first step in this direction, we prove semi-stability of the $1^{st}$ syzygy point 
for a general canonical curve of odd genus.
\end{abstract}
\author{Anand Deopurkar} 
\author{Maksym Fedorchuk} 
\author{David Swinarski}

\address[Deopurkar]{Department of Mathematics\\
Columbia University\\
2990 Broadway\\
New York, NY 10027}
\email{anandrd@math.columbia.edu}

\address[Fedorchuk]{Department of Mathematics \\ Boston College \\
140 Commonwealth Avenue \\
Chestnut Hill, MA 02467}
\email{maksym.fedorchuk@bc.edu}

\address[Swinarski]{Department of Mathematics\\
Fordham University\\
113 W 60th Street\\
New York, NY 10023}
\email{dswinarski@fordham.edu}

\thanks{The second author was partially supported by NSF grant DMS-1259226.}

\maketitle

\section{Introduction}
\label{S:introduction}

We revisit the problem of studying syzygies of canonically embedded
rational ribbons originally posed by Bayer and Eisenbud in
\cite{BE}. Their motivation for studying ribbons was in the context of
Green's conjecture for smooth canonical curves. Our motivation is
different, but related. Namely, we are interested in GIT stability of the
syzygies of canonically embedded curves 
as the means to the eventual goal of giving a GIT construction of the canonical model of $\M_g$.

The problem of GIT stability for the syzygy points of canonical
curves has origins in the log minimal model program for the moduli
space of stable curves.  Introduced by
Hassett and Keel, this program aims to construct log canonical models of $\M_g$ in a way
that allows modular interpretations of these models as moduli spaces
of stacks of increasingly more singular curves
\cite{hassett_genus2}. The log canonical divisors on (the stack)
$\Mg{g}$ considered in this program are
$K_{\Mg{g}}+\alpha\delta=13\lambda-(2-\alpha)\delta$ for $\alpha \in [0,1]\cap \QQ$.  
The work done so far suggests
that we can construct some of these models as GIT quotients of
spaces of $n$-canonically embedded curves. This is already evidenced in the
work of Gieseker \cite{gieseker} and Schubert \cite{schubert}, who analyzed the cases of $n\geq 5$ and $n = 3$, respectively.
Recent work of Hassett and Hyeon \cite{HH1,HH2} extends the
GIT analysis to $n = 2$ and constructs the first two log canonical models of $\M_g$ corresponding to $\alpha > \frac23$.
Subsequent work along this direction suggests that the case of $n = 1$ and the use of
finite Hilbert points would yield log canonical models corresponding
to the values of $\alpha$ down to $\alpha=\frac{g+6}{7g+6}$
\cite{AFS-stability,fedorchuk-jensen}.

The ultimate goal of the Hassett--Keel program is to reach $\alpha =
0$, which corresponds to the canonical model of $\M_g$. To go beyond
$\alpha = \frac{g+6}{7g+6}$ and indeed down to $\alpha = 0$, Se\'{a}n
Keel suggested that one should construct birational models of $\M_g$
as GIT quotients using the syzygies of canonically embedded curves. In
this paper, we make the first step toward this goal. We set up the GIT
problem for the syzygy points and  prove a generic semi-stability
result for the $1^{st}$ syzygies in odd genus.
\begin{main-theorem}[\autoref{T:main-stability}]
A general canonical curve of odd genus has a semi-stable $1^{st}$ syzygy point.
\end{main-theorem}

Our strategy for proving generic stability of syzygy points follows
that of \cite{AFS-stability} for proving generic stability of finite
Hilbert points. Namely, we prove the semi-stability of the $1^{st}$
syzygy point of a special singular curve---the balanced ribbon---by a method of \cite{morrison-swinarski}.

\subsection*{Outline of the paper} 
In Section~\ref{S:syzygies}, we define syzygy points of a
canonically embedded curve and give a precise statement of our main result. In
Section~\ref{S:ribbon}, we recall some preliminary results about balanced ribbons.
In Sections~\ref{S:stability} and~\ref{S:monomial-bases}, we
prove the main theorem. 
More precisely, Section~\ref{S:stability} contains the proof assuming the existence of suitable
bases for the space of (co)syzygies of the balanced ribbon and the more technical Section~\ref{S:monomial-bases}
is devoted to the construction of these bases.

\subsection*{Acknowledgements} 
We learned the details of Se\'{a}n Keel's idea to use syzygies as 
the means to construct the canonical model of $\M_g$ from a talk given by
Gavril Farkas at the AIM workshop \emph{Log minimal model program for moduli spaces} 
held in December 2012.  This paper grew out of our
attempt to implement the roadmap laid out in that talk.  We are
grateful to AIM for the opportunity to meet. The workshop participants of the working group on syzygies,
among them David Jensen, Ian Morrison, Anand Patel, and the present
authors, verified by a computer computation our main result for
$g=7$. This computation motivated us to search for a proof in the general case.

\section{Syzygies of canonical curves}
\label{S:syzygies}

In this section, we recall the notions of Koszul cohomology necessary to set-up
the GIT problems for all syzygies of canonical curves. We refer to \cite{green} and
\cite{aprodu-farkas-survey} for a complete treatment of Koszul
cohomology and a detailed discussion of Green's conjecture.

\subsection{Koszul cohomology}
\label{S:koszul}
Let $C$ be a Gorenstein curve with a very ample dualizing sheaf
$\omega_C$. Associated to $C$ is the Koszul complex
\begin{equation*}
  \bigwedge^{p+1}\HH^0(\omega_C)\otimes \HH^0(\omega_C^{q-1}) 
  \xrightarrow{f_{p+1,q-1}}
  \bigwedge^{p}\HH^0(\omega_C)\otimes \HH^0(\omega_C^{q}) 
  \xrightarrow{f_{p,q}}
  \bigwedge^{p-1}\HH^0(\omega_C)\otimes \HH^0(\omega_C^{q+1})
\end{equation*}
where the differentials $f_{p,q}$ are given by
\[f_{p,q}(x_0\wedge x_1\wedge \cdots\wedge x_{p-1}\otimes y)
=\sum_{i=0}^{p-1} (-1)^i x_0\wedge \cdots \wedge \widehat{x_i}\wedge \cdots \wedge x_{p-1}\otimes x_i y.
\]
The Koszul cohomology groups are 
\[K_{p,q}(C):=\ker f_{p,q} \big{/} \operatorname{im} f_{p+1,q-1}.\]

We say that $C$ satisfies property $(N_{p})$ if $K_{i,q}(C) = 0$ for
all $(i,q)$ with $i \leq p$ and $q \geq 2$. Property $(N_0)$ means
that the natural maps $\Sym^m \HH^0(\omega_C) \to \HH^0(\omega_C^m)$
are surjective for all $m$. Property $(N_p)$ for $p \geq 1$ means, in
addition, that the ideal of $C$ in the canonical embedding is
generated by quadrics and the syzygies of order up to $p$ are linear.

Set
\[
\Gamma_p(C) := \left(\bigwedge^{p+1} \HH^0(\omega_C) \otimes \HH^0(\omega_C)\right) \Big{/} \ \bigwedge^{p+2} \HH^0(\omega_C).
\] 

The first four terms of the Koszul complex in degree $p+2$ give the exact sequence
\[ 0 \to K_{p+1,1}(C) \to \Gamma_p(C)  \to \ker f_{p,2} \to K_{p,2}(C) \to 0.\]

\begin{definition}\label{D:syzygies}
  We define the \emph{space of $p^{th}$ order linear syzygies} of $C$
  as the subspace of $\Gamma_p(C)$ given by
  \[ \Syz_p(C) := K_{p+1,1}(C).\] Suppose $C$ satisfies property
  $(N_p)$ so that $K_{p,2}(C)=0$. We define the \emph{space of $p^{th}$ order linear
    cosyzygies} of $C$ as the quotient space of $\Gamma_p(C)$ given by
  \[ \CS_p(C) := \ker f_{p,2}.\]
\end{definition}

We relate the above definition to the definition of syzygies in terms of the homogeneous ideal of $C$. Let 
\[I_m(C) = \ker \left(\Sym^m \HH^0(\omega_C) \to \HH^0(\omega_C^m)\right)\]
be the degree $m$ graded piece of the homogeneous ideal of $C$. Assume that $C$ satisfies property $(N_0)$. 
Then the space of $p^{th}$ order linear syzygies among the defining quadrics of $C$ is taken to be the kernel of the map
\[ \bigwedge^p \HH^0(\omega_C) \otimes I_2(C) \stackrel\alpha\longrightarrow \bigwedge^{p-1} \HH^0(\omega_C) \otimes I_3(C).\]
The following lemma shows that this kernel is isomorphic to $K_{p+1,1}(C)$, and thus justifies Definition~\ref{D:syzygies}.
\begin{lemma}\label{L:proj-normal-Koszul}
  Assume that $C$ satisfies $(N_0)$. Then we have a natural isomorphism
  \[
  \ker \alpha \simeq K_{p+1,1}(C).
  \]
\end{lemma}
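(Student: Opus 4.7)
The plan is to compare the map $\alpha$ with the polynomial Koszul differential on $V := \HH^0(\omega_C)$ and exploit exactness of the latter. Writing $R_m := \HH^0(\omega_C^m)$, property $(N_0)$ gives short exact sequences $0 \to I_m(C) \to \Sym^m V \to R_m \to 0$ for every $m \geq 0$. The crucial structural observation is that $\alpha$ is nothing but the restriction to $\bigwedge^p V \otimes I_2(C)$ of the \emph{polynomial} Koszul differential
\[
\delta \colon \bigwedge^p V \otimes \Sym^2 V \longrightarrow \bigwedge^{p-1} V \otimes \Sym^3 V,
\]
which makes sense because $V \cdot I_2(C) \subseteq I_3(C)$.

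Next I would invoke the fact that the polynomial Koszul complex
\[
\bigwedge^{p+2} V \xrightarrow{\delta} \bigwedge^{p+1} V \otimes V \xrightarrow{\delta} \bigwedge^{p} V \otimes \Sym^2 V \xrightarrow{\delta} \bigwedge^{p-1} V \otimes \Sym^3 V
\]
is exact at its two middle spots, being the degree $p+2$ strand of the Koszul resolution of the ground field over $\Sym^\bullet V$. Combined with the surjections $\Sym^m V \twoheadrightarrow R_m$ from $(N_0)$, this produces the commutative diagram
\[
\begin{array}{ccccc}
\bigwedge^{p+1} V \otimes V & \xrightarrow{\delta} & \bigwedge^{p} V \otimes \Sym^2 V & \xrightarrow{\delta} & \bigwedge^{p-1} V \otimes \Sym^3 V \\
\Big\| & & \Big\downarrow & & \Big\downarrow \\
\bigwedge^{p+1} V \otimes V & \xrightarrow{f_{p+1,1}} & \bigwedge^{p} V \otimes R_2 & \xrightarrow{f_{p,2}} & \bigwedge^{p-1} V \otimes R_3
\end{array}
\]
in which the vertical surjections have kernels $\bigwedge^p V \otimes I_2(C)$ and $\bigwedge^{p-1} V \otimes I_3(C)$. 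I now chase the diagram. An element $\eta \in \bigwedge^{p+1} V \otimes V$ lies in $\ker f_{p+1,1}$ precisely when $\delta \eta \in \bigwedge^p V \otimes I_2(C)$, and then $\delta \eta \in \ker\alpha$ automatically since $\delta^2 = 0$. Conversely, any $\xi \in \ker \alpha \subseteq \bigwedge^p V \otimes \Sym^2 V$ is a $\delta$-cycle, so by exactness it equals $\delta \eta$ for some $\eta \in \bigwedge^{p+1} V \otimes V$; that lift automatically lies in $\ker f_{p+1,1}$. Hence $\delta$ induces a surjection $\ker f_{p+1,1} \twoheadrightarrow \ker \alpha$, whose kernel is $\ker\delta \cap (\bigwedge^{p+1} V \otimes V) = \operatorname{im}(\delta \colon \bigwedge^{p+2} V \to \bigwedge^{p+1} V \otimes V) = \operatorname{im} f_{p+2,0}$, again by exactness. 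Passing to quotients yields $K_{p+1,1}(C) = \ker f_{p+1,1}/\operatorname{im} f_{p+2,0} \simeq \ker \alpha$.

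No substantive geometric obstacle arises: the lemma is a purely homological reinterpretation, the sole input from the geometry of $C$ being property $(N_0)$. The one small subtlety to monitor is that $(N_0)$ is used not just in degree $2$ but also in degree $3$, so that $\ker(\Sym^3 V \to R_3) = I_3(C)$ and the rightmost vertical surjection has the expected kernel $\bigwedge^{p-1} V \otimes I_3(C)$; this compatibility is precisely what forces $\alpha$ and the top-row Koszul differential to correspond under the chase.
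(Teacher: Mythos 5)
Your proof is correct and follows essentially the same route as the paper: both compare the exact polynomial Koszul strand on $\Sym^\bullet \HH^0(\omega_C)$ with the curve's Koszul complex via the short exact sequences $0 \to I_m(C) \to \Sym^m \HH^0(\omega_C) \to \HH^0(\omega_C^m) \to 0$ supplied by $(N_0)$. The only difference is presentational: the paper packages the argument as the long exact sequence of a short exact sequence of (columnwise) complexes, whereas you carry out the corresponding connecting-homomorphism diagram chase by hand.
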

\begin{proof}
  We compare the Koszul complexes associated to the coordinate ring of the projective space $\PP\HH^0(\omega_C)$, 
  the coordinate ring of $C$, and the homogeneous ideal of $C$. For brevity, we write $\omega$ instead of $\omega_C$. 
  Consider the commutative diagram
  \begin{equation*}
    \begin{tikzpicture}
      \matrix (M)
      [
      matrix of nodes,
      column sep=1cm,
      row sep=.5cm,
      nodes={execute at begin node=$,
        execute at end node=$}
      ]
      {
        0 & \bigwedge^{p+2}\HH^0(\omega) & \bigwedge^{p+2}\HH^0(\omega)\\
        0 & \bigwedge^{p+1}\HH^0(\omega) \otimes \HH^0(\omega) & \bigwedge^{p+1}\HH^0(\omega) \otimes \HH^0(\omega)\\
        \bigwedge^p \HH^0(\omega) \otimes I_2(C) & \bigwedge^p\HH^0(\omega) \otimes \Sym^2\HH^0(\omega) & \bigwedge^p\HH^0(\omega) \otimes \HH^0(\omega^2) \\
        \bigwedge^{p-1} \HH^0(\omega) \otimes I_3(C) & \bigwedge^p\HH^0(\omega) \otimes \Sym^3\HH^0(\omega) & \bigwedge^p\HH^0(\omega) \otimes \HH^0(\omega^3) \\
      };
      \foreach \i in {1,2,3,4}
      \draw [->] 
      (M-\i-1) edge (M-\i-2) 
      (M-\i-2) edge (M-\i-3)
      ;
      \foreach \j in {1,2,3}
      \draw [->]
      (M-1-\j) edge (M-2-\j)
      (M-2-\j) edge (M-3-\j)
      (M-3-\j) edge (M-4-\j);
      \draw [->] (M-3-1) edge node [right] {$\alpha$} (M-4-1);
    \end{tikzpicture}
  \end{equation*}
  The rows form short exact sequences due to property $(N_0)$. The middle column is exact since it is the Koszul complex associated to 
  $\O(1)$ of the projective space $\PP\HH^0(\omega)$. The right column is the Koszul complex of $C$. Taking the long exact sequence
   associated to the short exact sequence of the columnwise complexes, and using the exactness of the middle column, we get the isomorphism
  \[ \ker \alpha \simeq K_{p+1,1}(C).\]
\end{proof}

\subsection{Syzygy points}
\label{S:syzygy-points}
Suppose $C$ satisfies property $(N_p)$. Then the Koszul complex in degree $p+2$
\[ 0\to \bigwedge^{p+2} \HH^0(\omega_C) \to \dots 
\to \bigwedge^i \HH^0(\omega_C)\otimes \HH^0(\omega_C^{p+2-i}) \to \dots \to \HH^0(\omega_C^{p+2}) \to 0.\]
is exact everywhere except at the second non-zero term, where the cohomology group is $K_{p+1,1}(C)$. We can thus readily compute that
\begin{equation}\label{eqn:dimensions}
  \begin{split}
    \dim \CS_p(C) &= (3g-2p-3) {{g-1} \choose p} \text{, and} \\
    \dim \Gamma_p(C) &= g{g \choose {p+1}} - {g \choose {p+2}}.
  \end{split}
\end{equation}

\begin{definition}\label{D:syzygy-points}
  Suppose $C$ satisfies property $(N_p)$. We define the \emph{$p^{th}$
    syzygy point of $C$} to be the quotient of $\Gamma_p(C)$ given by
  \[
\left[  \Gamma_p(C) \ra \CS_p(C) \ra 0\right],
  \]
 and interpreted as a point in the Grassmannian $\Grass\left((3g-2p-3){{g-1} \choose p}, \Gamma_p(C)\right)$. 
\end{definition}
Abusing notation, we use $\CS_p(C)$ to denote both the vector space itself and the point in 
$\Grass\left((3g-2p-3){{g-1} \choose p}, \Gamma_p(C)\right)$ that it represents. 
Observe that the $0^{th}$ syzygy point is simply the $2^{nd}$ Hilbert point. 

For which curves is the $p^{th}$ syzygy point defined? According to a
celebrated conjecture, a smooth canonical curve $C$ satisfies $(N_p)$
if and only if $p$ is less that the Clifford index of $C$. Formulated
by Green in \cite{green}, this conjecture remains open in its full
generality.  It is known to be true, however, for a large class of
curves. Voisin proved that general canonical curves on $K3$ surfaces
satisfy Green's conjecture
\cite{voisin-green-even,voisin-green-odd}. More recently, Aprodu and
Farkas proved the conjecture for all smooth curves on $K3$ surfaces
\cite{aprodu-farkas-K3}. In particular, the $p^{th}$ syzygy point of a generic curve of genus $g$ is
defined for all $p < \lfloor g/2
\rfloor$.

\begin{definition}
We define $\overline \Syz_p$ to be the closure in
$\Grass\left((3g-2p-3){{g-1} \choose p}, \Gamma_p(C)\right)$ 
of the locus of $p^{th}$ syzygy points of canonical curves satisfying property $(N_p)$. 
\end{definition}

Consider the group $\SL_g \simeq \SL (\HH^0(\omega_C))$. Its natural action on $\HH^0(\omega_C)$ induces the action on the vector space 
$\Gamma_p(C)$, the Grassmannian $\Grass\left((3g-2p-3){{g-1} \choose p}, \Gamma_p(C)\right)$, and finally on 
the subvariety $\overline \Syz_p$. The Pl\"ucker line bundle on the Grassmannian comes with a natural $\SL_g$ linearization, 
and so does its restriction to $\overline \Syz_p$. 
A candidate for the \emph{$p^{th}$ syzygy model} of $\overline M_g$ is thus the GIT quotient
\[ \overline \Syz_p \gitq \SL_g.\]

Our main theorem shows that this quotient is non-empty for $p=1$ and odd $g$.
\begin{theorem}\label{T:main-stability}
  A general canonical curve of odd genus has a semi-stable $1^{st}$
  syzygy point.
\end{theorem}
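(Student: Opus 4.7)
The plan is to follow the strategy of \cite{AFS-stability} for generic semi-stability of finite Hilbert points: since semi-stability is an open condition and smooth canonical curves form an open dense subset of $\overline{\Syz}_1$, it suffices to exhibit a single semi-stable point of $\overline{\Syz}_1$. The natural candidate, and the analogue of the curve used in the finite Hilbert point results, is the \emph{balanced ribbon} $C$ of odd genus $g = 2k+1$: the non-reduced double structure on $\PP^1$ introduced by Bayer--Eisenbud. It is a flat limit of smooth canonical curves and satisfies property $(N_1)$, so $\CS_1(C)$ is well-defined and $[C] \in \overline{\Syz}_1$.

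By the Hilbert--Mumford numerical criterion, semi-stability of $\CS_1(C)$ must be tested against every 1-parameter subgroup of $\SL(\HH^0(\omega_C)) \simeq \SL_g$. The balanced ribbon carries a two-dimensional torus $T \subset \operatorname{Aut}(C)$ that diagonalizes $\HH^0(\omega_C)$ into one-dimensional $T$-weight spaces, so after conjugation every 1-PS to be tested may be taken diagonal in the induced basis. The problem then becomes combinatorial: for each integer weight vector $(\lambda_1,\ldots,\lambda_g)$ with $\sum_i \lambda_i = 0$, exhibit a Plücker coordinate of the quotient $\Gamma_1(C) \twoheadrightarrow \CS_1(C)$ (equivalently, a subset of $\dim \CS_1(C) = (3g-5)(g-1)$ $T$-eigenvectors of $\Gamma_1(C)$ whose images in $\CS_1(C)$ are linearly independent) whose total $\rho$-weight is non-positive. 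The Morrison--Swinarski method achieves this by producing an explicit \emph{monomial basis} of $\CS_1(C)$ (and the dual basis of $\Syz_1(C)$) whose elements are $T$-eigenvectors with explicitly enumerable weights, and whose $T$-state polytope contains the origin. Concretely, I would seek a basis with weights invariant under a natural involution swapping the two rulings of the ribbon; this symmetry forces the centroid of the weights to be zero and yields the Hilbert--Mumford inequality uniformly in $\rho$.

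The main obstacle, and the technical heart of the argument, is the explicit construction of this monomial basis. The Koszul cohomology of the balanced ribbon admits a concrete description via bi-graded polynomials on its two rulings, but singling out subsets of monomials spanning $\Syz_1(C)$ and $\CS_1(C)$ with the required $T$-weight symmetry---and verifying both their linear independence and the correct dimension count coming from \eqref{eqn:dimensions}---is a delicate combinatorial task. This is precisely the content deferred to Section~\ref{S:monomial-bases}. Once the bases are in hand, the Hilbert--Mumford verification in Section~\ref{S:stability} is a straightforward averaging argument over the $T$-weights, establishing semi-stability at the ribbon; openness of semi-stability then propagates the conclusion to a general smooth canonical curve of odd genus, proving \autoref{T:main-stability}.
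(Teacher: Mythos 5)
Your overall roadmap matches the paper's: degenerate to the balanced ribbon $R$ of genus $g = 2k+1$, show $K_{1,2}(R)=0$ so the first syzygy point is defined, verify semi-stability of $\CS_1(R)$ by producing explicit monomial bases with enumerable $T$-weights, and conclude for the general smooth curve since $R$ deforms to one and semi-stability is open. That is indeed the structure of Sections~\ref{S:stability}--\ref{S:monomial-bases}.

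Two points, though, are wrong in ways that matter. First, the reduction to diagonal one-parameter subgroups is not the conjugation you describe: the automorphism torus of $R$ is the \emph{one-dimensional} $\GG_m$ of Proposition~\ref{P:mult-free}, not a two-dimensional torus. The actual reduction is a Kempf-type argument (citing \cite[Prop.~4.7]{morrison-swinarski} and \cite[Prop.~2.4]{AFS-stability}): because $\HH^0(R,\omega)$ is a \emph{multiplicity-free} $\GG_m$-representation (weights $-k,\dots,k$ all distinct), semi-stability of $\CS_1(R)$ for the full maximal torus $T \subset \SL_g$ diagonal in $\{x_0,\dots,x_{2k}\}$ already implies $\SL_g$-semi-stability. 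Conjugation of a 1-PS into $T$ would not by itself suffice, since the Hilbert--Mumford criterion requires testing conjugates with the fixed point, not the fixed 1-PS.

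Second, and more seriously, the claim that a $\ZZ_2$-symmetric monomial basis automatically has $T$-state at the barycenter is false. The $\ZZ_2$-involution only forces $n_i = n_{2k-i}$; it does not pin the state to $\frac{3(3g-5)(g-1)}{g}\sum_i x_i$. In fact, for most $g$ no single monomial basis can have that state: the barycenter entries are $3(3g-5)(g-1)/g$, which is integral only when $g \mid 15$. The paper therefore constructs \emph{three} $\ZZ_2$-symmetric monomial bases $\C^+,\C^-,\C^\star$ (Propositions~\ref{P:first-basis}, \ref{P:second-basis}, \ref{P:third-basis}) and proves in Lemma~\ref{L:barycenter} that the barycenter lies in the \emph{convex hull} of their $T$-states; Hilbert--Mumford then gives $T$-semi-stability. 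Your plan as written would fail at this step: you need a family of bases and the convex-hull computation, not one symmetric basis.
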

We prove this theorem in Section \ref{S:stability}.

\section{The balanced canonical ribbon}
\label{S:ribbon}
We prove Theorem~\ref{T:main-stability} by explicitly writing down a
semi-stable point in $\overline \Syz_1$.  This point corresponds to
the syzygies of the balanced ribbon. Our exposition of its properties
closely follows \cite{AFS-stability} where the semi-stability of
Hilbert points of this ribbon was established.  Nevertheless, we
recall the necessary details for the reader's convenience.
 
Let $g = 2k + 1$. The \emph{balanced ribbon} of genus $g$ is the
scheme $R$ obtained by identifying $U := \spec \CC[u,
\epsilon]/(\epsilon^2)$ and $V := \spec \CC[v,\eta]/(\eta^2)$ along $U
\setminus \{0\}$ and $V \setminus \{0\}$ via the isomorphism
\begin{equation}\label{E:gluing}
\begin{aligned}
u &\mapsto v^{-1} - v^{-k-2} \eta, \\
\epsilon &\mapsto v^{-g-1} \eta.
\end{aligned}
\end{equation}
The scheme $R$ is an example of a \emph{rational ribbon}. While our proofs use only 
the balanced ribbon, we refer the reader to \cite{BE} for a more extensive
study of ribbons in general. 

Being a Gorenstein curve, $R$ has a
dualizing line bundle $\omega$, generated by $\frac{du \wedge d\epsilon}{\epsilon^2}$ on $U$, 
and by $\frac{dv \wedge d\eta}{\eta^2}$ on $V$. Since $\omega$ is very ample by
\cite[Lemma 3.2]{AFS-stability}, the global sections of $\omega$ embed $R$ as an arithmetically 
Gorenstein curve in $\PP^{g-1}$. As a result, we have $K_{i,q}(R)=0$ for all $q \geq 3$ and $i \leq g-3$.
In particular, property $(N_p)$ is equivalent to $K_{p,2}(R)=0$ \cite{ein}.

The balanced ribbon $R$ admits a $\GG_m$-action, given by
\begin{alignat*}{2}
  t \cdot u &\mapsto tu, &\quad t \cdot \epsilon &\mapsto t^{k+1} \epsilon \, , \\
  t \cdot v &\mapsto t^{-1}v, &\quad t \cdot \eta &\mapsto t^{-k-1}
  \eta.
\end{alignat*}
This action induces $\GG_m$-actions on $\HH^0(R, \omega^m)$ for all $m$. The
next two propositions describe these spaces along with their
decomposition into weight spaces.
\begin{prop}\label{P:mult-free}
  A basis for $\HH^0(R, \omega)$ is given by $x_0, \dots, x_{2k}$,
  where the $x_i$'s restricted to $U$ are given by
 \[
 x_i = \begin{cases}
   u^i \ \frac{du \wedge d\epsilon}{\epsilon^2}& \text{ if $0 \leq i \leq k$}\\
   \left(u^i + (i-k)u^{i-k-1} \epsilon\right)\ \frac{du \wedge
     d\epsilon}{\epsilon^2} & \text{ if $k < i \leq 2k$},
 \end{cases}
 \]
 and where $x_i$ is a $\GG_m$-semi-invariant of weight $i-k$. 
 In particular, $\HH^0(R, \omega)$ splits as a direct sum of $g$ distinct $\GG_m$ weight-spaces of weights $-k,\dots, k$.
\end{prop}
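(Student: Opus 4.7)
The plan is to verify four claims: (i) each $x_i$ extends to a global section of $\omega$; (ii) $x_i$ is a $\GG_m$-semi-invariant of weight $i-k$; (iii) the sections $x_0, \dots, x_{2k}$ are linearly independent; and (iv) they span $\HH^0(R,\omega)$.

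First I would settle (ii), which is immediate from the action. A short computation gives $t\cdot(du\wedge d\epsilon)=t^{k+2}\,du\wedge d\epsilon$ and $t\cdot\epsilon^2 = t^{2k+2}\epsilon^2$, so the generator $\alpha := \frac{du\wedge d\epsilon}{\epsilon^2}$ has weight $-k$; then $u^i\alpha$ has weight $i-k$, and for $k<i\leq 2k$ the term $u^{i-k-1}\epsilon\,\alpha$ also has weight $(i-k-1)+(k+1)-k = i-k$. In particular each $x_i$ is an eigenvector of weight $i-k$. Since the weights $-k, -k+1, \dots, k$ are pairwise distinct, claim (iii) follows at once.

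The substantive step is (i). I would substitute the gluing~\eqref{E:gluing} into the defining formula for $x_i$ and use $\eta^2=0$ to obtain an explicit expression for $x_i|_V$. A direct expansion yields $u^i|_V = v^{-i} - i\,v^{-i-k-1}\eta$ for every $i$, and for $k<i\leq 2k$ the identity $u^{i-k-1}\epsilon|_V = v^{-i-k-1}\eta$. The role of the $\epsilon$-correction in the definition of $x_i$ for $i>k$ is precisely to replace the coefficient $-i\,v^{-i-k-1}$ of $\eta\alpha$ by $-k\,v^{-i-k-1}$. To conclude the regularity of $x_i|_V$ one then pins down the transition function of $\omega$ between $\alpha$ and the $V$-generator $\beta := \frac{dv\wedge d\eta}{\eta^2}$, which comes from computing $du\wedge d\epsilon$ and $\epsilon^2$ in the $V$-chart. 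The main technical obstacle lies here: one must carefully propagate the nilpotent corrections through the gluing and check that the tailored coefficient $-k$ is exactly what is needed to cancel the otherwise too-high pole at $v=0$ coming from the $v^{-i}$ term for $i>k$.

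Finally, (iv) is a dimension count. Since $R$ is Gorenstein of arithmetic genus $g=2k+1$, Serre duality gives $\dim\HH^1(R,\omega)=\dim\HH^0(R,\O_R)=1$, and Riemann--Roch yields $\dim\HH^0(R,\omega)=g$. Combined with (iii) and the construction of $2k+1$ sections, this forces $\{x_0,\dots,x_{2k}\}$ to be a basis of $\HH^0(R,\omega)$, whence the weight-space decomposition.
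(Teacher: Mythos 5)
Your strategy (direct verification on the two charts plus a Riemann--Roch dimension count) is a legitimate alternative to the paper's proof, which simply cites \cite[Theorem 5.1]{BE} for the basis statement and observes that the weight claim is obvious. Your steps (ii)--(iv) are fine: the generator $\frac{du\wedge d\epsilon}{\epsilon^2}$ has weight $-k$, the correction term $u^{i-k-1}\epsilon$ has the same weight $i-k$ as $u^i$, distinct weights give linear independence, and $h^0(R,\omega)=g$ follows from duality and Riemann--Roch (granting the small check that the gluing \eqref{E:gluing} produces a ribbon of arithmetic genus $g=2k+1$, i.e.\ that the nilpotent ideal restricts to $\O_{\PP^1}(-g-1)$). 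Your pullback formulas $u^i|_V=v^{-i}-i\,v^{-i-k-1}\eta$ and $u^{i-k-1}\epsilon|_V=v^{-i-k-1}\eta$ are also correct.

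The problem is that step (i) is exactly the content of the proposition, and you leave it as a described ``technical obstacle'' rather than carry it out; as written the proof is incomplete at its only substantive point. Two things are missing. First, a justification that the symbol $\frac{du\wedge d\epsilon}{\epsilon^2}$ transforms by the formal rule ``Jacobian over $\epsilon^2$'': since $\epsilon^2=0$ on $R$ this is not a literal quotient of forms, so one must invoke the adjunction description of $\omega$ for the local hypersurface $\{\epsilon^2=0\}\subset\spec\CC[u,\epsilon]$ together with the observation that \eqref{E:gluing} lifts to a map of ambient smooth surfaces carrying $\{\eta^2=0\}$ to $\{\epsilon^2=0\}$ (indeed $\epsilon^2=v^{-4k-4}\eta^2$), or else quote the description of $\omega$ for ribbons from \cite{BE}. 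Second, the computation itself: the Jacobian of \eqref{E:gluing} is $-v^{-2k-4}-k\,v^{-3k-5}\eta$, so $\frac{du\wedge d\epsilon}{\epsilon^2}=-\bigl(v^{2k}+k\,v^{k-1}\eta\bigr)\frac{dv\wedge d\eta}{\eta^2}$, and combining with your pullbacks gives $x_i|_V=-\bigl(v^{2k-i}+(k-i)v^{k-i-1}\eta\bigr)\frac{dv\wedge d\eta}{\eta^2}$ for $i\le k$ and $x_i|_V=-v^{2k-i}\,\frac{dv\wedge d\eta}{\eta^2}$ for $k<i\le 2k$; these are visibly regular (for $i=k$ the coefficient $k-i$ vanishes, and for $i>k$ the $\eta$-term cancels entirely, which is precisely the role of the correction $(i-k)u^{i-k-1}\epsilon$). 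Until this transition function is pinned down and the cancellation checked, the regularity of the $x_i$ on $V$ -- hence the proposition -- is not established. If you prefer to avoid the adjunction discussion altogether, the shortest complete argument is the paper's: quote \cite[Theorem 5.1]{BE} for the basis and keep your weight computation.
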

\begin{proof}
That $x_i$'s form a basis follows from \cite[Theorem 5.1]{BE}. The statement about the weights is obvious.
\end{proof}

\begin{remark}[$\ZZ_2$-symmetry]\label{R:symmetry}
Observe that $R$ has a $\ZZ_2$-symmetry given by the isomorphism $V\simeq U$ defined by 
$u\leftrightarrow v$ and $\epsilon \leftrightarrow \eta$ and commuting with the gluing isomorphism \eqref{E:gluing}.
The $\ZZ_2$-symmetry exchanges $x_i$ and $x_{2k-i}$. 
\end{remark}

The following basic observation helps in dealing with higher powers of $\omega$.

\begin{lemma}[Ribbon Product Lemma]
\label{lem:rpl}
Let $0 \leq i_1, \dots, i_m \leq 2k$ be such that $i_1, \dots, i_\ell \leq k$ and $i_{\ell+1}, \dots, i_m > k$. On $U$, we have
\[ x_{i_1} \cdots x_{i_m} =\left(u^a + (a-b)u^{a-k-1}\epsilon\right)\ \left(\frac{du \wedge d\epsilon}{\epsilon^2}\right)^m,\]
where
 \begin{align*}
   a &= i_1 + \dots + i_m \ , \\
   b &= i_1 + \dots + i_\ell + k(m-\ell).
 \end{align*}
\end{lemma}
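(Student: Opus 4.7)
The plan is a direct computation on $U$, reducing modulo $\epsilon^2 = 0$. Using Proposition~\ref{P:mult-free}, I write each factor uniformly as
\[
x_{i_j} = \bigl(u^{i_j} + c_j\, u^{i_j - k - 1}\epsilon\bigr)\,\frac{du \wedge d\epsilon}{\epsilon^2},
\]
where $c_j := i_j - k$ if $i_j > k$ and $c_j := 0$ if $i_j \leq k$ (so that the term with a potentially negative power of $u$ never actually appears). This uniform bookkeeping is the key simplification: it lets me treat all factors in parallel instead of splitting into cases.

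Next, I expand the product $\prod_{j=1}^m x_{i_j}$. Because $\epsilon^2 = 0$, any term involving two or more $\epsilon$-factors vanishes, so only the ``constant'' term and the terms linear in $\epsilon$ survive:
\[
\prod_{j=1}^{m}\bigl(u^{i_j} + c_j\, u^{i_j - k - 1}\epsilon\bigr)
= u^{a} + \epsilon \sum_{j=1}^{m} c_j\, u^{a - k - 1},
\]
where $a = i_1 + \cdots + i_m$. Thus
\[
x_{i_1}\cdots x_{i_m} = \Bigl(u^{a} + \bigl(\textstyle\sum_{j} c_j\bigr) u^{a-k-1}\epsilon\Bigr)\,\left(\frac{du\wedge d\epsilon}{\epsilon^{2}}\right)^{m}.
\]

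It remains to identify $\sum_j c_j$ with $a - b$. Only the indices $j > \ell$ contribute, so
\[
\sum_{j=1}^{m} c_j = \sum_{j=\ell+1}^{m}(i_j - k) = (i_{\ell+1} + \cdots + i_m) - k(m - \ell),
\]
and by definition $a - b = (i_1 + \cdots + i_m) - \bigl(i_1 + \cdots + i_\ell + k(m-\ell)\bigr) = (i_{\ell+1} + \cdots + i_m) - k(m-\ell)$, matching exactly. Substituting completes the proof.

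There is no real obstacle here; the only thing to be careful about is the uniform choice of $c_j$ so that the $i_j \leq k$ factors contribute $0$ to the $\epsilon$-coefficient and do not introduce spurious terms with negative exponents of $u$. Once this is handled, the statement reduces to the elementary identity $\sum_j c_j = a - b$.
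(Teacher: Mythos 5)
Your proof is correct. The paper itself offers no argument here — it simply cites \cite[Lemma 3.4]{AFS-stability} — so there is no proof in the paper to compare against, but your direct computation is exactly what one would want: you write each $x_{i_j}$ uniformly as $(u^{i_j} + c_j u^{i_j-k-1}\epsilon)\frac{du\wedge d\epsilon}{\epsilon^2}$ with $c_j = \max(i_j - k, 0)$, expand modulo $\epsilon^2 = 0$, and verify the elementary identity $\sum_j c_j = a - b$. The one point worth being slightly more explicit about (you do flag it) is that whenever $c_j = 0$ the factor $u^{i_j-k-1}$ with its potentially negative exponent never actually occurs, and that in the final formula $u^{a-k-1}$ is well-defined because either $\ell < m$, forcing $a > k$, or $\ell = m$, in which case $a - b = 0$ and the term is absent; with that observation the argument is airtight.
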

\begin{proof}
This is simply \cite[Lemma 3.4]{AFS-stability} in our notation.
\end{proof}

\begin{definition}
\label{D:u-degree}
The \emph{$u$-weight} (or \emph{$u$-degree})
of a monomial $x_{i_1}\cdots x_{i_m}$ is the sum $i_1 + \dots + i_m$.
Note that the $u$-weight of $x_{i_1}\cdots x_{i_m}$ 
equals to the $\GG_m$-weight of $x_{i_1}\cdots x_{i_m}$ plus $km$.
\end{definition}

\begin{prop}\label{thm:weight_decomposition_wm}
 Let $m \geq 2$. Let $\HH^0(R, \omega^m)_d$ be the weight-space of $\HH^0(R, \omega^m)$ of 
 $u$-weight $d$. Then
 \[ \dim\HH^0(R, \omega^m)_d =   \begin{cases}
   1 &\text{if $0 \leq d \leq k$,}\\
   2 &\text{if $k < d < 2km-k$,}\\
   1 &\text{if $2km-k \leq d \leq 2km$.}
 \end{cases}
 \]
 Moreover, the map $\Sym^m\HH^0(R, \omega) \to\HH^0(R, \omega^m)$ is surjective.
\end{prop}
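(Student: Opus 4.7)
The plan is to establish the weight-space dimensions and the surjectivity of $\Sym^m \HH^0(R,\omega) \to \HH^0(R,\omega^m)$ simultaneously via a dimension count. By Riemann--Roch on the Gorenstein curve $R$, using $\deg\omega = 2g-2 = 4k$ and $\HH^0(R,\omega^{1-m}) = 0$ for $m \geq 2$ (a negative-degree line bundle on a connected Gorenstein curve), one computes $h^0(R,\omega^m) = 4km - 2k$. This agrees with the sum of the claimed weight-space dimensions,
\[
(k+1)\cdot 1 + (2km - 2k - 1)\cdot 2 + (k+1)\cdot 1 = 4km - 2k,
\]
so it suffices to produce, in each weight $d$, an upper bound on the dimension of the image of $\Sym^m\HH^0(\omega)$ in $\HH^0(R,\omega^m)_d$ and to match it from below by explicit monomials; equality of the totals then automatically forces surjectivity.

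For the upper bound, apply the Ribbon Product Lemma (Lemma~\ref{lem:rpl}): any monomial $x_{i_1}\cdots x_{i_m}$ of $u$-weight $d$ restricts on $U$ to $(u^d + c\cdot u^{d-k-1}\epsilon)\bigl(\tfrac{du\wedge d\epsilon}{\epsilon^2}\bigr)^m$ for some integer $c = a-b$. Since $U$ is dense in $R$ and $\omega^m$ is locally free, a global section is determined by its restriction to $U$. Hence the image of the weight-$d$ piece lies in the span of $u^d(\mathrm{vol})^m$ and $u^{d-k-1}\epsilon\,(\mathrm{vol})^m$, of dimension at most $2$. For $d \leq k$, any high index $i_j > k$ would force $d \geq k+1$, a contradiction; hence $\ell = m$, so $b = a$, $c = 0$, and the image is at most $1$-dimensional. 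The $\ZZ_2$-symmetry $x_i \leftrightarrow x_{2k-i}$ of Remark~\ref{R:symmetry} exchanges weights $d$ and $2km - d$, so the same bound of $1$ holds for $d \geq 2km - k$.

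For the lower bound, produce explicit monomials. For $0 \leq d \leq k$, the section $x_0^{m-1}x_d$ restricts on $U$ to $u^d(\mathrm{vol})^m$ and is nonzero; symmetrically, $x_{2k}^{m-1}x_{d-2k(m-1)}$ handles $2km-k \leq d \leq 2km$. For $k < d \leq mk$, exhibit two monomials of $u$-weight $d$ with different $c$: one with all indices in $\{0,\dots,k\}$ (possible because $d \leq mk$), yielding $c = 0$; and one of the form $x_{k+1}\cdot(\text{$m-1$ low indices summing to $d-k-1$})$ (possible because $0 \leq d-k-1 \leq (m-1)k$), yielding $c = 1$. Their $U$-restrictions, $u^d(\mathrm{vol})^m$ and $(u^d + u^{d-k-1}\epsilon)(\mathrm{vol})^m$, are linearly independent, so the two sections are linearly independent globally. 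For $mk < d < 2km - k$, apply the $\ZZ_2$-symmetry of Remark~\ref{R:symmetry} once more.

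Summing the matched bounds across all $d$ recovers exactly $4km - 2k = h^0(R,\omega^m)$, so the image of $\Sym^m\HH^0(\omega)$ must already fill $\HH^0(R,\omega^m)$; this yields surjectivity and the weight-space formulas simultaneously. The main technical obstacle is the bookkeeping of realizability of the two $c$-values throughout the middle range; the $\ZZ_2$-symmetry essentially halves the case analysis by reducing the upper middle range $mk < d < 2km - k$ to the lower middle range.
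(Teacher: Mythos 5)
Your proposal is correct and takes essentially the same route as the paper: both exhibit the monomials $x_0^{m-1}x_d$ and $x_{2k}^{m-1}x_{d-2k(m-1)}$ in the extreme weights and, in each middle weight, two monomials with distinct $\epsilon$-coefficients via the Ribbon Product Lemma, and then conclude by comparing the count with $h^0(\omega^m)=(2m-1)(g-1)$; your per-weight upper bound through restriction to $U$ and the use of the $\ZZ_2$-symmetry are only a mild repackaging of the paper's direct construction of an eigenbasis. One small caution: the parenthetical claim that a negative-degree line bundle on a connected Gorenstein curve has no sections is false in general (it fails for reducible nodal curves); the vanishing of $\HH^0(R,\omega^{1-m})$ does hold here, but because the reduced support of $R$ is an irreducible $\PP^1$ and both the subsheaf $\omega^{1-m}\otimes\I$ and the quotient $\omega^{1-m}|_{\PP^1}$ in the restriction sequence have negative degree.
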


\begin{proof}
 Using the generator $(\frac{du\wedge d\epsilon}{\epsilon^2})^m$ of $\omega^m$ on $U$, 
 let us identify the sections of $\omega^m$ on $U$ with the elements of $\CC[x,\epsilon]/(\epsilon^2)$.  
 Consider the following $(2m-1)(g-1)$ sections of $\omega^m$ on $U$:
 \begin{equation}\label{eq:sections_of_mw}
   \{u^i\}_{i=0}^{2mk-k-1}, \quad \{u^i+(i-mk)u^{i-k-1}\epsilon \}_{i=k+1}^{2mk}.
 \end{equation}
 We claim that these sections are in the image of $\Sym^m \HH^0(R, \omega)$. Indeed, for $0 \leq i \leq k$, the monomial $x_0^{m-1} x_{i}$ 
 restricts to $u^i$. For $2mk-k \leq i \leq 2mk$, the monomial $x_{2k}^{m-1}x_{i+2k-2mk}$ restricts to $u^i + (i-mk)u^{i-k-1}$. For $k < i < 2mk-k$, 
 it suffices to exhibit two monomials $x_{i_1}\cdots x_{i_m}$ with $i_1 + \dots + i_m = i$ whose restrictions to $U$ are linearly independent. 
This is easy to do using Lemma~\ref{lem:rpl}; we leave this to the reader.

We conclude that the sections listed in \eqref{eq:sections_of_mw} extend to global sections of $\omega^m$. By construction, these global sections 
are in the image of $\Sym^m\HH^0(R, \omega)$. Since these sections are linearly independent and their number equals $h^0(\omega^m)$, they 
form a basis of $\HH^0(\omega^m)$. We conclude that $\Sym^m\HH^0(R, \omega) \to\HH^0(R, \omega^m)$ is surjective. 
The sections $u^i (\frac{du\wedge d\epsilon}{\epsilon^2})^m$ are eigenvectors of $\GG_m$ with weights $-km, \dots, km-k-1$. The sections 
$(u^i+(i-mk)u^{i-k-1}\epsilon)(\frac{du\wedge d\epsilon}{\epsilon^2})^m$ are eigenvectors of $\GG_m$ with weights $-km+k+1, \dots, km$. 
Combining the two, we get the dimensions of the weight spaces.
\end{proof}

The following is immediate from Proposition~\ref{thm:weight_decomposition_wm}.
\begin{corollary}\label{thm:what_makes_a_basis}
 Let $\B$ be a set of monomials of degree $m$ in the variables $x_0, \dots, x_{2k}$. Its image in $\HH^0(R, \omega^m)$ forms a basis if and only if
 \begin{enumerate}
 \item For $0 \leq d \leq k$ and $2km - k \leq d \leq 2km$, $\B$ contains one monomial of $u$-weight $d$.
 \item For $k < d < 2km-k$, $\B$ contains two monomials of $u$-weight $d$ and these two monomials 
 are linearly independent in $\HH^0(R, \omega^m)$.
 \end{enumerate}
\end{corollary}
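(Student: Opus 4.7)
The plan is to deduce the corollary directly from the weight-space structure established in Proposition~\ref{thm:weight_decomposition_wm}. The key observation I would start from is that the multiplication map
\[
\mu \co \Sym^m \HH^0(R,\omega) \to \HH^0(R, \omega^m)
\]
is $\GG_m$-equivariant, so it decomposes as a direct sum of maps between individual weight spaces. Each monomial $x_{i_1}\cdots x_{i_m}$ is a $\GG_m$-eigenvector, and by Definition~\ref{D:u-degree} its $\GG_m$-weight equals its $u$-weight $i_1+\cdots+i_m$ shifted by $-km$. Consequently, the image in $\HH^0(R,\omega^m)$ of any monomial of $u$-weight $d$ lies in the weight space $\HH^0(R,\omega^m)_d$.

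Then I would argue as follows. Since $\mu$ is surjective by Proposition~\ref{thm:weight_decomposition_wm}, the images of the monomials of $u$-weight $d$ span $\HH^0(R,\omega^m)_d$. Thus $\B$ maps to a spanning set of $\HH^0(R,\omega^m)$ if and only if, for each $d$, the elements of $\B$ of $u$-weight $d$ span $\HH^0(R,\omega^m)_d$. The dimensions recorded in Proposition~\ref{thm:weight_decomposition_wm} translate this condition into precisely statements~(1) and~(2) of the corollary: a single monomial suffices when the weight space is one-dimensional, while two linearly independent monomials are required when it is two-dimensional.

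Finally, I would confirm that the stated conditions force $\B$ to be a basis, not merely a spanning set, by a simple cardinality count. Summing the weight-space dimensions from Proposition~\ref{thm:weight_decomposition_wm} gives
\[
2(k+1) + 2(2km-2k-1) = 2k(2m-1) = (2m-1)(g-1) = h^0(\omega^m),
\]
and any $\B$ satisfying (1) and (2) has exactly this cardinality, so spanning is equivalent to being a basis. There is no genuine obstacle here; the only thing to keep track of is the constant shift between the $u$-weight and the $\GG_m$-weight, which is a matter of bookkeeping rather than substance.
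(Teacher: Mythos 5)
Your argument is correct and is exactly the route the paper takes: the corollary is stated there as an immediate consequence of Proposition~\ref{thm:weight_decomposition_wm}, obtained by decomposing $\HH^0(R,\omega^m)$ into its $\GG_m$ weight spaces, noting each monomial maps into the weight space of its $u$-weight, and matching against the recorded dimensions (your cardinality count $2(k+1)+2(2km-2k-1)=(2m-1)(g-1)=h^0(\omega^m)$ checks out). Nothing is missing.
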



We recall the following result:
\begin{prop}
\label{P:quadratic-bases}
The following are bases of $\HH^0(R, \omega^2)$: 
\begin{align}\label{eq:Bases_2w}
 \mathcal B^+ &:= \{x_0x_i\}_{i=0}^{2k} \cup \{x_kx_i\}_{i=1}^{2k-1} \cup \{x_{2k}x_i\}_{i=1}^{2k}\\
 \mathcal B^- &:= \{x_i^2\}_{i=0}^{2k} \cup \{x_ix_{i+1}\}_{i=0}^{2k-1} \cup \{x_ix_{i+k}\}_{i=1}^{k-1} \cup \{x_ix_{i+k+1}\}_{i=0}^{k-1}.
\end{align}
Both $\B^{+}$ and $\B^{-}$ are symmetric with respect to the $\ZZ_2$-symmetry of $R$
and consist of $\GG_m$-semi-invariant sections. 
The breakdown of $\mathcal B^+$ by $u$-weight in the range $0 \leq d \leq 2k$ is:
\begin{alignat*}{2}
 &x_0 x_d &\quad&\text{for $0 \leq d \leq k$}  \\
 &x_0 x_d,\ x_k x_{d-k} &\quad&\text{for $k < d \leq 2k$} 
\end{alignat*}
The breakdown of $\mathcal B^-$ by $u$-weight in the range $0 \leq d \leq 2k$ is:
\begin{alignat*}{2}
 &x_{\lfloor d/2 \rfloor} x_{\lceil d/2 \rceil} &\quad&\text{for $0 \leq d \leq k$}  \\
 &x_{\lfloor d/2 \rfloor} x_{\lceil d/2 \rceil},\ x_{\lfloor (d-k)/2 \rfloor}x_{\lceil (d+k)/2 \rceil} &\quad&\text{for $k < d \leq 2k$}.
\end{alignat*}
The breakdown in the range $2k\leq d\leq 4k$ is obtained by using the $\ZZ_2$-symmetry.  
\end{prop}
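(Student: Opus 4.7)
The plan is to verify both claims of the proposition at once by applying Corollary \ref{thm:what_makes_a_basis}, which reduces checking that a set of degree-two monomials forms a basis of $\HH^0(R,\omega^2)$ to (a) counting monomials by $u$-weight, and (b) verifying linear independence of the pair of monomials at each $u$-weight $d$ with $k < d < 3k$. The cardinalities $|\mathcal{B}^\pm|=6k=h^0(\omega^2)$ should also be checked at the outset as a sanity count.

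First I would compute the $u$-weight breakdown of $\mathcal{B}^\pm$ in the range $0 \leq d \leq 2k$ by direct enumeration; since the $u$-weight of $x_i x_j$ is simply $i+j$, this is pure bookkeeping and reproduces the tables in the statement. The $\ZZ_2$-symmetry of $R$ from Remark~\ref{R:symmetry} exchanges $x_i$ with $x_{2k-i}$ and thus sends $u$-weight $d$ to $u$-weight $4k-d$; a quick check confirms that each of the subsets making up $\mathcal{B}^+$ (resp.\ $\mathcal{B}^-$) is preserved. This simultaneously yields the claimed $\ZZ_2$-symmetry of the bases and reduces the $u$-weight breakdown for $2k < d \leq 4k$ to the range already handled. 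The $\GG_m$-semi-invariance of each monomial is immediate from Proposition \ref{P:mult-free}.

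The substantive step is linear independence of each pair. I would invoke the Ribbon Product Lemma (Lemma \ref{lem:rpl}) specialized to $m=2$: a monomial $x_i x_j$ with $i+j=d$ restricts to $U$ as
\[
x_i x_j = \bigl(u^d + (d-b)\,u^{d-k-1}\epsilon\bigr)\Bigl(\frac{du\wedge d\epsilon}{\epsilon^2}\Bigr)^{2},
\]
with $b$ taking one of three possible values according to how many of $i,j$ exceed $k$: $b=i+j$ if both are $\leq k$, $b=\min(i,j)+k$ if exactly one exceeds $k$, and $b=2k$ if both do. Since both monomials in a pair share the same leading coefficient of $u^d$, linear independence is equivalent to their two $b$-values being distinct. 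For $\mathcal{B}^+$ the pair $(x_0 x_d,\, x_k x_{d-k})$ gives $b=k$ and $b=d$; for $\mathcal{B}^-$ the pair $(x_{\lfloor d/2\rfloor}x_{\lceil d/2\rceil},\, x_{\lfloor(d-k)/2\rfloor}x_{\lceil(d+k)/2\rceil})$ gives $b=d$ and $b=\lfloor(d-k)/2\rfloor+k$. In both cases the difference is nonzero precisely because $d>k$. Independence of pairs in the range $2k<d<3k$ then follows from the $\ZZ_2$-symmetry established above.

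The main obstacle, if one can call it that, is purely organizational: one must correctly track the case split in Lemma \ref{lem:rpl} and evaluate $b$ for each pair without error. No deeper input beyond Proposition \ref{P:mult-free}, Lemma \ref{lem:rpl}, and Corollary \ref{thm:what_makes_a_basis} is required.
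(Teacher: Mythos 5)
Your proof is correct, but it takes a genuinely different (and more self-contained) route than the paper. The paper disposes of the basis claim in one line by citing Lemma~4.3 of \cite{AFS-stability} and declares the weight breakdown obvious; you instead verify everything internally via Corollary~\ref{thm:what_makes_a_basis} together with the Ribbon Product Lemma~\ref{lem:rpl}, computing the $\epsilon$-coefficient (equivalently the value of $b$) for each monomial in a weight-$d$ pair and checking the two values differ exactly when $d>k$, then transporting the range $2k<d<3k$ by the $\ZZ_2$-symmetry. Your $b$-values check out: for $\mathcal B^+$ the pair $(x_0x_d,\,x_kx_{d-k})$ gives $b=k$ versus $b=d$, and for $\mathcal B^-$ the pair gives $b=d$ versus $b=\lfloor(d-k)/2\rfloor+k$, with difference $\lceil(d-k)/2\rceil>0$; combined with the cardinality count $|\mathcal B^{\pm}|=6k=h^0(\omega^2)$ and the weight enumeration, this meets the criterion of Corollary~\ref{thm:what_makes_a_basis}. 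What your approach buys is independence from the external reference, at the cost of the bookkeeping you acknowledge; the paper's citation is shorter but leaves the reader to consult \cite{AFS-stability}. One small inaccuracy worth fixing: the $\ZZ_2$-involution does \emph{not} preserve each of the three subsets constituting $\mathcal B^+$ individually --- it essentially interchanges $\{x_0x_i\}_{i=0}^{2k}$ and $\{x_{2k}x_i\}_{i=1}^{2k}$ (only $\mathcal B^-$ has its four constituent families preserved) --- but it does preserve the union $\mathcal B^+$, which is all your symmetry argument needs.
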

\begin{proof}
The fact that $\B^{+}$ and $\B^{-}$ are bases of $\HH^0(R, \omega^2)$ is the content of \cite[Lemma 4.3]{AFS-stability}.
The weight decomposition statement is obvious. 
\end{proof}

We record a simple observation about expressing arbitrary quadratic monomials in $\HH^0(R, \omega^2)$ 
in terms of the monomials of $\mathcal B^-$
(it will be used repeatedly in Section \ref{second-basis}):
\begin{lemma}[Quadratic equations]\label{L:quadratic}
 Consider $0 \leq i \leq j \leq 2k$ and set $d = i+j$. 
 Then in $\HH^0(R, \omega^2)$ we have a relation
 \[ x_i x_j = \lambda x_{\lfloor d/2 \rfloor} x_{\lceil d/2 \rceil} + \mu x_{\lfloor (d-k)/2 \rfloor}x_{\lceil (d+k)/2 \rceil},\]
 where $\lambda$ and $\mu$ are uniquely determined rational numbers satisfying:
 \begin{enumerate}
 \item  $\lambda + \mu = 1$,
 \item if $j\leq k$ or $i\geq k$, then $\mu=0$,
 \item if $j-i = k$ or $j-i = k+1$, then $\lambda = 0$;   
 \item if $j-i < k$, then $\lambda, \, \mu > 0$;
 \item if $j-i > k+1$, then $\lambda < 0$, $\mu > 0$.
 \end{enumerate}
\end{lemma}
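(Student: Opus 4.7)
The plan is to combine the $\GG_m$-weight decomposition of $\HH^0(R, \omega^2)$ with the explicit restriction formulas given by the Ribbon Product Lemma~\ref{lem:rpl}. Since $\mathcal{B}^-$ is a basis of $\HH^0(R, \omega^2)$ by Proposition~\ref{P:quadratic-bases} consisting of $\GG_m$-semi-invariants, and $x_i x_j$ is itself a $\GG_m$-semi-invariant of $u$-weight $d = i+j$, the unique expansion of $x_i x_j$ in $\mathcal{B}^-$ involves only basis monomials of $u$-weight $d$. Inspecting the weight breakdown in Proposition~\ref{P:quadratic-bases}, together with its $\ZZ_2$-symmetric extension for $d>2k$, these basis monomials are exactly $M_1 := x_{\lfloor d/2\rfloor} x_{\lceil d/2\rceil}$ and (whenever $k < d < 3k$) $M_2 := x_{\lfloor (d-k)/2\rfloor} x_{\lceil (d+k)/2\rceil}$. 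This establishes the existence of rational $\lambda,\mu$ with $x_i x_j = \lambda M_1 + \mu M_2$.

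To pin them down, I would apply Lemma~\ref{lem:rpl}, which shows that every $x_p x_q$ with $0 \leq p \leq q \leq 2k$ restricts on $U$ to $\bigl(u^{p+q} + c(p,q)\,u^{p+q-k-1}\epsilon\bigr)\bigl(\tfrac{du\wedge d\epsilon}{\epsilon^2}\bigr)^2$, where $c(p,q) := \max(0,p-k) + \max(0,q-k)$, as one sees directly from the three ranges in the lemma. Matching the $u^d$ coefficient of $x_i x_j = \lambda M_1 + \mu M_2$ yields $\lambda + \mu = 1$, which is part~(1). Matching the $u^{d-k-1}\epsilon$ coefficient yields the single equation $\lambda\,c(M_1) + \mu\,c(M_2) = c(i,j)$, and together with~(1) this system determines $\lambda,\mu$.

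To cut down the casework, I would invoke the $\ZZ_2$-symmetry from Remark~\ref{R:symmetry}: it sends $x_\ell \mapsto x_{2k-\ell}$, hence $x_i x_j \mapsto x_{2k-j} x_{2k-i}$, and a direct calculation shows it swaps $M_1(d), M_2(d)$ with $M_1(4k-d), M_2(4k-d)$. Since $r := j-i$ is preserved and the hypothesis ``$j\leq k$'' is exchanged with ``$i \geq k$'', each of the conclusions (1)--(5) is $\ZZ_2$-equivariant. Thus it suffices to handle $d \leq 2k$, where $c(M_1) = 0$ and the linear system collapses to $\mu\,c(M_2) = c(i,j)$.

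The remaining verification is a routine sign check. If $j \leq k$ (so both indices lie in $[0,k]$), then $c(i,j) = 0$, forcing $\mu = 0$ and $\lambda = 1$; this gives~(2). Otherwise $i < k < j$, so $c(i,j) = j-k > 0$; and for $k < d \leq 2k$ one computes $c(M_2) = \lceil (d-k)/2 \rceil > 0$, hence $\mu > 0$. The ceiling identity
\[
c(M_2) - (j-k) = \left\lceil \tfrac{d-k}{2} \right\rceil - (j-k) = \left\lceil \tfrac{k-r}{2} \right\rceil
\]
then gives $\lambda = \lceil (k-r)/2 \rceil / c(M_2)$, whose three sign regimes $r < k$, $r \in \{k,k+1\}$, $r > k+1$ yield exactly conclusions~(4), (3), and~(5) respectively. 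No individual step is deep; the main obstacle is simply keeping the casework and the floor/ceiling arithmetic organized, particularly at the boundary values of $d$ where $M_1 = M_2$ in $\HH^0(R,\omega^2)$ and several conditions overlap harmlessly.
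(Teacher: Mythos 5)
Your proof is correct and takes essentially the same route as the paper's: existence and uniqueness of $\lambda,\mu$ from the $\GG_m$-weight decomposition of the basis $\mathcal B^-$, explicit restrictions to $U$ via the Ribbon Product Lemma, reduction to $k<d\leq 2k$ by the $\ZZ_2$-symmetry, and then comparison of $\epsilon$-coefficients. Your uniform coefficient function $c(p,q)=\max(0,p-k)+\max(0,q-k)$ and the ceiling identity $\lceil(d-k)/2\rceil-(j-k)=\lceil(k-r)/2\rceil$ are just a tidier repackaging of the paper's direct comparison of $j-k$ with $\lceil(d-k)/2\rceil$.
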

\begin{proof} The existence and uniqueness of the relation follows from Proposition \ref{P:quadratic-bases}.
We now establish the claims about the coefficients for $k < d < 3k$, the remaining cases being clear.
 By the $\ZZ_2$-symmetry, we may take $k < d \leq 2k$. If $j\leq k$, the statement is clear. If $j>k$, then
 \begin{align*}
   x_{\lfloor d/2 \rfloor} x_{\lceil d/2 \rceil} &= u^d,\\
   x_{\lfloor (d-k)/2 \rfloor} x_{\lceil (d+k)/2 \rceil} &= u^d + \lceil (d-k)/2 \rceil u^{d-k}\epsilon,\\
   x_i x_j &= u^d + (j-k)u^{d-k} \epsilon.
 \end{align*}
Now, (1) follows from equating the coefficients of $u^d$. If $j-i = k$ or $j-i = k+1$, then 
$(i,j) = (\lfloor (d-k)/2 \rfloor, \lceil (d+k)/2 \rceil)$; so (3) follows. Finally, (4) and (5)
follow from equating the coefficients of $u^{d-k}\epsilon$ and observing that if $j - i < k$, then 
$j-k < \lceil (d-k)/2 \rceil$, and if $j-i > k+1$, then $j-k > \lceil(d-k)/2 \rceil$.
\end{proof}

\section{Semi-stability of the $1^{st}$ syzygy point}
\label{S:stability}
In this section, we prove that the balanced canonical ribbon $R$ has 
semi-stable $1^{st}$ syzygy point, while relegating the key technical constructions 
to the next section. 

Let $R$ be the balanced canonical ribbon introduced in the previous section. We abbreviate
$\HH^0(R,\omega^m)$ as $\HH^0(\omega^m)$. We also set
\[ \Gamma := \left(\bigwedge^2 \HH^0(\omega) \otimes \HH^0(\omega)\right)
\Big{/} \ \bigwedge^3 \HH^0(\omega).\] 
For $x,y,z\in \HH^0(\omega)$, we call the image of $(x\wedge y)\otimes z$ in $\Gamma$
a \emph{cosyzygy}. By a slight abuse of notation, we use the same notation for $(x\wedge y)\otimes z$ 
and its image in $\Gamma$.  
With this convention, the only linear relations among cosyzygies in $\Gamma$ are 
\[ (x\wedge y)\otimes z + (y\wedge z)\otimes x +  (z\wedge x)\otimes y= 0.\] 

For the $1^{st}$ syzygy point, the relevant strand of the Koszul complex
is
\[ 0 \to \bigwedge^{3} \HH^0(\omega) \to \bigwedge^2 \HH^0(\omega) \otimes
\HH^0(\omega) \xrightarrow{f_{2,1}} \HH^0(\omega) \otimes \HH^0(\omega^2)
\xrightarrow{f_{1,2}} \HH^0(\omega^3) \to 0.\] 
By Definition \ref{D:syzygy-points}, the $1^{st}$ syzygy point of $R$ is
well-defined if and only if $K_{1,2}(R) = 0$ if and only if the map $\Gamma \to \ker f_{1,2}$
induced by the above complex is surjective. Before proving that $K_{1,2}(R) = 0$, we make a
definition.
\begin{definition}\label{D:monomial-basis}
  A set $\mathcal{C}=\{(x_a \wedge x_b) \otimes x_c\}_{(a,b,c)\in S}
  \subset \Gamma$ is called a \emph{monomial basis of cosyzygies} if
$\{f_{2,1}\bigl((x_a \wedge x_b) \otimes x_c\bigr)\}_{(a,b,c)\in S}$ form a
  basis of $\ker f_{1,2}$.
\end{definition}
Note that if $\Gamma \to \ker f_{1,2}$ is surjective, then by
\eqref{eqn:dimensions} we have
\[ \dim \ker f_{1,2} = (3g-5)(g-1).\] Therefore, a set
$\mathcal{C}=\{(x_a \wedge x_b) \otimes x_c\}_{(a,b,c)\in S} \subset
\Gamma$ is a monomial basis of cosyzygies if and only if the following two conditions are satisfied
\begin{enumerate}
\item $\C$ has $(3g-5)(g-1)$ elements,\\
\item $\left\{f_{2,1}\bigl((x_a \wedge x_b) \otimes x_c\bigr)=x_b\otimes x_ax_c-x_a\otimes x_bx_c\right\}_{(a,b,c)\in S}$ span $\ker f_{1,2}$.
\end{enumerate}
\begin{prop}\label{P:n1}
  For the balanced canonical ribbon $R$ of odd genus $g \geq 5$, we have $K_{1,2}(R) = 0$.
\end{prop}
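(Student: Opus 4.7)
The plan is to reduce $K_{1,2}(R)=0$ to the existence of a monomial basis of cosyzygies in the sense of Definition~\ref{D:monomial-basis}, and then to build such a basis weight-by-weight with respect to the $\GG_m$-action on $R$.

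First, since property $(N_0)$ holds for $R$ by Proposition~\ref{thm:weight_decomposition_wm}, the map $f_{1,2}$ is surjective, and
\[
\dim \ker f_{1,2} = g(3g-3) - 5(g-1) = (3g-5)(g-1).
\]
The differential $f_{2,1}$ vanishes on the image of $\bigwedge^3\HH^0(\omega)$ (this is exactness of the Koszul complex at the previous term), so it factors through a map $\bar f_{2,1}\co \Gamma \to \ker f_{1,2}$ whose cokernel is precisely $K_{1,2}(R)$. The proposition is therefore equivalent to exhibiting $(3g-5)(g-1)$ cosyzygies $(x_a\wedge x_b)\otimes x_c$ whose images
\[
x_b \otimes x_a x_c - x_a \otimes x_b x_c
\]
form a basis of $\ker f_{1,2}$; these lie automatically in $\ker f_{1,2}$, so only the count and linear independence require work.

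To carry out the construction I would split everything by $\GG_m$-weight. Both $f_{2,1}$ and $f_{1,2}$ are $\GG_m$-equivariant, and Proposition~\ref{thm:weight_decomposition_wm} pins down $\dim \HH^0(\omega^m)_d$, hence $\dim(\ker f_{1,2})_w$ in each weight $w$. In each weight the task is to produce the prescribed number of linearly independent cosyzygy images. The natural input is Lemma~\ref{L:quadratic}, which presents every monomial $x_ax_c \in \HH^0(\omega^2)$ as an explicit $\QQ$-combination of the two basis elements of $\mathcal{B}^-$ in its weight and thereby determines precisely how the various $x_b\otimes x_ax_c$ collapse in $\HH^0(\omega)\otimes\HH^0(\omega^2)$. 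I would also insist on choosing a $\ZZ_2$-invariant set of cosyzygies (Remark~\ref{R:symmetry}), which halves the casework and will be useful for the subsequent stability argument.

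The main obstacle, which occupies Section~\ref{S:monomial-bases}, is the combinatorial bookkeeping: producing exactly $(3g-5)(g-1)$ cosyzygies, correctly distributed among weights, and certifying that their images are linearly independent in $\ker f_{1,2}$. The most natural certificate is a triangular expansion of these images in the monomial basis $\{x_b \otimes b' : b'\in \mathcal{B}^-\}$ of $\HH^0(\omega)\otimes\HH^0(\omega^2)$; the distinct sign and vanishing behavior of the coefficients $\lambda,\mu$ in Lemma~\ref{L:quadratic} across the ranges $j-i<k$, $j-i\in\{k,k+1\}$, $j-i>k+1$ is exactly what should enforce nonzero diagonal entries and make such a triangular structure possible. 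Once the basis is in place, $\bar f_{2,1}$ is surjective by the dimension count and $K_{1,2}(R)=0$ follows.
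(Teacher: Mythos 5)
Your reduction is correct and coincides with the paper's setup: $K_{1,2}(R)=0$ is equivalent to surjectivity of $\Gamma\to\ker f_{1,2}$, the dimension count $\dim\ker f_{1,2}=(3g-5)(g-1)$ follows from $(N_0)$ (Proposition~\ref{thm:weight_decomposition_wm}), and exhibiting a monomial basis of cosyzygies in the sense of Definition~\ref{D:monomial-basis} finishes the proof. The paper's proof of this proposition is exactly that reduction, with the actual content deferred to the constructions of $\mathcal{C}^+$, $\mathcal{C}^-$, $\mathcal{C}^\star$ in Propositions~\ref{P:first-basis}, \ref{P:second-basis}, and \ref{P:third-basis}.

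The genuine gap is that you never produce such a basis, and the certificate you propose in its place is not substantiated and would be very hard to make work as stated. You claim that the images $x_b\otimes x_ax_c-x_a\otimes x_bx_c$, expanded in the basis $\{x_i\otimes \mathfrak{m}:\mathfrak{m}\in\mathcal{B}^-\}$, should admit a triangular structure with nonzero diagonal entries forced by the sign behavior in Lemma~\ref{L:quadratic}. But each such image expands into up to four basis tensors, and the rewriting process governed by Lemma~\ref{L:quadratic} replaces one tensor by a combination of \emph{two} others (one balanced, one $k$-balanced) with no evident monomial order that decreases; this is precisely why the paper does not argue by independence/triangularity at all. Instead it checks the dual condition degree by degree: that the chosen cosyzygies, viewed as relations, cut $\HH^0(\omega)\otimes\HH^0(\omega^2)$ down to dimension at most $\dim\HH^0(\omega^3)_d$ in each $u$-degree. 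For $\mathcal{C}^+$ this is a direct spanning argument using $\mathcal{B}^+$ (not $\mathcal{B}^-$), and for $\mathcal{C}^-$ and $\mathcal{C}^\star$ it requires Proposition~\ref{P:markov_chain}: the iterated reductions are organized as an absorbing Markov chain on well-balanced tensors (Lemma~\ref{L:markov}), and the nontriviality of the few extra relations (e.g.\ the ones of type \ref{funny}, \ref{fudge_low}) is established by tracking the \emph{signs} of the limiting coefficients in Parts 2(b)--(d). None of this convergence-and-sign analysis is replaced by your triangularity heuristic, so the core of the proposition --- the existence of even one monomial basis of cosyzygies --- remains unproved in your plan.
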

\begin{proof}
For the proof, it suffices to exhibit a monomial basis of cosyzygies. We exhibit three such bases in Proposition~\ref{P:first-basis}, 
Proposition~\ref{P:second-basis}, and Proposition~\ref{P:third-basis}, respectively.
\end{proof}

Let $T \subset \GL(\HH^0(\omega))$ be the maximal torus acting
diagonally on the distinguished basis $\{x_i\}_{i=0}^{2k}$ of
$\HH^0(\omega)$. This basis yields a distinguished basis of
$\Gamma$ consisting of the $T$-eigenvectors $(x_a\wedge x_b)\otimes x_c$. 
Clearly, the monomial bases of cosyzygies correspond precisely to the non-zero
Pl\"{u}cker coordinates of $\CS_1(R)\in \Grass\bigl((3g-5)(g-1), \Gamma\bigr)$ with respect to this basis of
eigenvectors. To every such coordinate, and in turn, to every monomial
basis $\C$, we can associate a $T$-character, called the \emph{$T$-state of $\C$}. 
We may represent the $T$-state as a linear combination of $x_0, \dots, x_{2k}$. 
Precisely, the $T$-state of $\C=\{(x_a \wedge x_b)\otimes x_c\}_{(a,b,c)\in S}$ is given by
\[
w_T(\C):=\sum_{(a,b,c)\in S} w_T\bigl((x_a \wedge x_b)\otimes x_c\bigr) = \sum_{(a,b,c)\in S} (x_a+x_b+x_c)
=n_0x_0+\cdots +n_{2k} x_{2k},
\]
where $n_i$ is the number of occurrences of $x_i$ among the cosyzygies in $\C$. Note that we always have 
\[\sum_{i=0}^{2k} n_i=3(3g-5)(g-1).\]

We are now ready to prove our main theorem.
\begin{theorem}\label{T:ribbon-stability}
  Let $g \geq 5$ be odd. The balanced canonical ribbon of genus $g$ has $\SL_g$ semi-stable $1^{st}$ syzygy point.
\end{theorem}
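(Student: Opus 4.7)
The plan is to apply the Hilbert--Mumford numerical criterion and to reduce, using the symmetries of $R$, to a state-polytope computation for a maximal torus. This follows the strategy of \cite{morrison-swinarski}, as carried out for finite Hilbert points of the balanced ribbon in \cite{AFS-stability}.

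\textbf{Reduction to $T$-semistability.} The $\GG_m$-action on $R$ described in Section~\ref{S:ribbon} induces, via its action on $\HH^0(\omega)$, an embedding $\GG_m \hookrightarrow \SL(\HH^0(\omega)) \simeq \SL_g$ with pairwise distinct weights $-k, -k+1, \ldots, k$ on the basis $\{x_i\}$ (Proposition~\ref{P:mult-free}). Since this $\GG_m$ preserves $R$, it fixes the syzygy point $\CS_1(R)$ in the Grassmannian. By Kempf's theorem on the optimal destabilizing $1$-parameter subgroup, any destabilizing $1$-PS of a $\GG_m$-fixed point may be chosen from the centralizer of that $\GG_m$. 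Because the $\GG_m$-weights on $\HH^0(\omega)$ are pairwise distinct, this centralizer inside $\SL_g$ is precisely the maximal torus $T$ acting diagonally on $\{x_0, \ldots, x_{2k}\}$. It therefore suffices to prove $T$-semistability of $\CS_1(R)$.

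\textbf{Criterion via monomial bases.} By Hilbert--Mumford for the torus $T$, $T$-semistability of $\CS_1(R) \in \Grass\bigl((3g-5)(g-1), \Gamma\bigr)$ is equivalent to the assertion that the barycenter
\[
B := \frac{3(3g-5)(g-1)}{g} \sum_{i=0}^{2k} x_i
\]
lies in the convex hull of the $T$-weights appearing with non-zero coefficient in the Pl\"{u}cker expansion of $\CS_1(R)$ in the basis $\{(x_a \wedge x_b)\otimes x_c\}$. As noted just before the statement of the theorem, these $T$-weights are precisely the $T$-states $w_T(\C)$ of monomial bases of cosyzygies $\C$. Hence the task is reduced to exhibiting monomial bases $\C_1, \ldots, \C_N$ of cosyzygies and non-negative coefficients $\alpha_j$ with $\sum \alpha_j = 1$ such that $\sum \alpha_j\, w_T(\C_j) = B$.

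\textbf{Assembling the bases.} I invoke the three explicit monomial bases of cosyzygies constructed in Section~\ref{S:monomial-bases}, whose existence was used in Proposition~\ref{P:n1}, and compute their $T$-states. The $\ZZ_2$-symmetry of $R$ from Remark~\ref{R:symmetry} carries each monomial basis $\C_j$ to another monomial basis, whose $T$-state is obtained from $w_T(\C_j)$ by the involution $x_i \leftrightarrow x_{2k-i}$. In particular, averaging a state with its $\ZZ_2$-flip produces a $\ZZ_2$-symmetric vector. Using the explicit descriptions in Section~\ref{S:monomial-bases}, I exhibit non-negative coefficients for which the convex combination of the three states and their $\ZZ_2$-flips equals $B$; this is a finite and completely explicit linear-algebra verification in the character lattice.

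\textbf{Main obstacle.} The crux of the argument is not the reduction or the convex combination but rather the construction of the three monomial bases themselves. Each candidate set must be shown to map to a basis of $\ker f_{1,2}$, which requires combining Corollary~\ref{thm:what_makes_a_basis} (to keep track of weight multiplicities in $\HH^0(\omega^2)$ and $\HH^0(\omega^3)$) with the quadratic relations of Lemma~\ref{L:quadratic} to analyze the explicit images $f_{2,1}\bigl((x_a\wedge x_b)\otimes x_c\bigr) = x_b\otimes x_ax_c - x_a\otimes x_bx_c$. Moreover, the three bases must be chosen so that their $T$-states, together with the $\ZZ_2$-flips, place enough points in the character lattice to surround $B$; one expects one basis concentrated near the extremes $x_0, x_{2k}$, one near the middle $x_k$, and a third interpolating between them. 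This combinatorial design of the bases, which is the technical content of Section~\ref{S:monomial-bases}, is the main difficulty; the present section uses it as a black box.
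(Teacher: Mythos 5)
Your proposal is correct and follows essentially the same route as the paper: reduce to $T$-semistability via the $\GG_m$-action having pairwise distinct weights (the paper cites \cite[Proposition 4.7]{morrison-swinarski} and \cite[Proposition 2.4]{AFS-stability} for this, which are precisely the Kempf-type reduction you sketch), identify the nonzero Pl\"ucker coordinates of $\CS_1(R)$ with monomial bases of cosyzygies, and then exhibit three such bases whose $T$-states surround the barycenter (Lemma~\ref{L:barycenter} supplies the explicit convex combination). One small remark: the averaging step with $\ZZ_2$-flips you mention is vacuous here, since each of $\C^+$, $\C^-$, $\C^\star$ is already $\ZZ_2$-invariant, so its $T$-state is symmetric under $x_i \leftrightarrow x_{2k-i}$ to begin with; and in the final step you should actually exhibit nonnegative coefficients realizing the barycenter rather than asserting them, but this is the short computation of Lemma~\ref{L:barycenter}.
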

\begin{proof}
Because $\HH^0(R,\omega)$ is a multiplicity-free representation of 
$\GG_m \subset \operatorname{Aut}(R)$ by Proposition \ref{P:mult-free}, it suffices to verify 
semi-stability of $\CS_1(R)$ with respect to the maximal torus
 $T$ acting diagonally on the basis $\{x_0,\dots, x_{2k}\}$ of
$\HH^0(R,\omega)$; see \cite[Proposition 4.7]{morrison-swinarski} and \cite[Proposition 2.4]{AFS-stability}. 

The non-zero Pl\"{u}cker coordinates of $\CS_1(R)$ diagonalizing the action of $T$ 
are precisely the monomial bases of cosyzygies. 
In Section \ref{S:monomial-bases}, we construct three monomial bases of cosyzygies,
$\C^+, \C^{-}$, and $\C^{\star}$ with the following $T$-states:
\begin{align*}
w_{T}(\mathcal C^+) &= (g^2-1)(x_0+x_k+x_{2k})+(6g-6)\sum_{i\neq 0,k,2k} x_i \, ,\\
w_{T}(\mathcal C^-) &= (7g-12)(x_0+x_{2k}) + (7g-15) x_k + (9g-18)\sum_{i \neq 0,k,2k} x_i \, ,\\
w_{T}(\mathcal C^\star) &= \frac{15g-29}{2}\left(x_0+x_{2k}\right) + (8g-16) x_k + (9g-20) \sum_{i \neq 0,k,2k} x_i \,.
\end{align*}
The $T$-semi-stability of $\CS_1(R)$ now follows from 
Lemma \ref{L:barycenter} below and the Hilbert-Mumford numerical criterion.
\end{proof}

\begin{corollary}[{Theorem \ref{T:main-stability}}]
  A general canonical curve of odd genus has a semi-stable $1^{st}$
  syzygy point.
\end{corollary}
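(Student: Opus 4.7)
The plan is to deduce the corollary from Theorem \ref{T:ribbon-stability} via a standard GIT openness argument, using the smoothability of the balanced canonical ribbon to transport semi-stability from a single degenerate point to a Zariski-open locus containing a general smooth canonical curve. The proof has three essentially formal steps once Theorem \ref{T:ribbon-stability} is established.

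First, I would verify that $\CS_1(R)$ lies in $\overline{\Syz}_1$. By Proposition \ref{P:n1}, $R$ satisfies property $(N_1)$, so its $1^{st}$ syzygy point is well defined. By the smoothability of Gorenstein rational ribbons established in \cite{BE}, the canonically embedded $R \subset \PP^{g-1}$ arises as the central fibre of a flat family whose general fibre is a smooth canonically embedded curve of genus $g$. Because property $(N_1)$ is an open condition in flat families, the generic fibre of this family also satisfies $(N_1)$; the $1^{st}$ syzygy points therefore assemble into a morphism from a punctured neighborhood of $0$ into the Grassmannian, which extends across $0$ by properness and sends $0$ to $\CS_1(R)$. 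Hence $\CS_1(R) \in \overline{\Syz}_1$.

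Next, I would invoke the openness of the semi-stable locus. Theorem \ref{T:ribbon-stability} gives that $\CS_1(R)$ is $\SL_g$-semi-stable with respect to the Pl\"ucker polarization on the Grassmannian, and hence also with respect to its restriction to $\overline{\Syz}_1$. The semi-stable locus in $\overline{\Syz}_1$ is Zariski open and $\SL_g$-invariant, so there is an $\SL_g$-invariant open neighborhood $U \subset \overline{\Syz}_1$ of $\CS_1(R)$ consisting entirely of semi-stable points.

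Finally, by the very definition of $\overline{\Syz}_1$, the point $\CS_1(R)$ lies in the closure of the locus of $1^{st}$ syzygy points of smooth canonical curves satisfying $(N_1)$; consequently $U$ must meet this locus in a nonempty, $\SL_g$-invariant open subset. Since the $\SL_g$-quotient of the locus of syzygy points of smooth canonical curves is an open subset of the moduli of smooth canonical curves of genus $g$, this open subset descends to a nonempty open subset of the moduli space. Therefore a general canonical curve of odd genus has semi-stable $1^{st}$ syzygy point. The only nontrivial input beyond Theorem \ref{T:ribbon-stability} is the smoothability of $R$ in its canonical embedding together with the flatness of the syzygy point construction along the smoothing; both are classical and were used in exactly this way in \cite{AFS-stability} for the analogous Hilbert-point result.
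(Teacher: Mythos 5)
Your proposal is correct and takes essentially the same route as the paper: the paper's proof is the one-line observation that the corollary follows from Theorem \ref{T:ribbon-stability} because $R$ deforms to a smooth canonical curve (citing Fong for smoothability of ribbons), exactly the openness-of-semi-stability argument you have spelled out in detail.
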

\begin{proof}
This follows from the fact that $R$ deforms to a smooth canonical curve \cite{fong}.
\end{proof}

\begin{lemma}\label{L:barycenter} Suppose $g\geq 5$. Let $\C^+, \C^{-}$, and $\C^{\star}$ be the monomial
bases constructed in Section \ref{S:monomial-bases}.
Then the convex hull of the $T$-states 
$w_T(\mathcal C^+)$, $w_T(\mathcal C^-)$, and $w_T(\mathcal C^\star)$ contains the barycenter 
\[
\frac{3(3g-5)(g-1)}{g} (\sum_{i=0}^{2k} x_i).\]
\end{lemma}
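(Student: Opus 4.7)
The plan is to write the barycenter explicitly as a convex combination $\alpha w_T(\C^+) + \beta w_T(\C^-) + \gamma w_T(\C^\star)$ with $\alpha,\beta,\gamma \geq 0$ and $\alpha+\beta+\gamma = 1$.

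First, I would collapse the dimension of the problem using the $\ZZ_2$-symmetry of Remark~\ref{R:symmetry}: each of the three $T$-states, as well as the barycenter, is invariant under $x_i \leftrightarrow x_{2k-i}$, and an inspection of the formulas in Theorem~\ref{T:ribbon-stability} shows that each state assigns a \emph{single common value} to all $x_i$ with $i \neq 0, k, 2k$. Hence every character of interest reduces to a triple $(a,b,c) \in \RR^3$, where $a$ is the coefficient of $x_0$ (equivalently $x_{2k}$), $b$ that of $x_k$, and $c$ the common coefficient on the remaining $g-3$ indices. The three states become $(g^2-1,\ g^2-1,\ 6g-6)$, $(7g-12,\ 7g-15,\ 9g-18)$, and $\bigl((15g-29)/2,\ 8g-16,\ 9g-20\bigr)$, while the barycenter becomes $(B,B,B)$ with $B = 3(3g-5)(g-1)/g$.

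Next, I would set up the three coordinate equations together with $\alpha+\beta+\gamma = 1$. Since each state has total weight $3(3g-5)(g-1) = gB$, the linear identity $2a+b+(g-3)c = gB$ is built in; hence only two of the three coordinate equations are independent modulo the normalization, and the system reduces to $2\times 2$. Subtracting the $b$-equation from the $a$-equation eliminates $\alpha$ (because $\C^+$ assigns the same value to $x_0$ and $x_k$) and pins down the ratio $\gamma/\beta = 6/(g-3)$; substituting into the difference between the $c$- and $a$-equations determines $\beta$. A routine computation then yields the closed form
\[
\alpha = \frac{2g^2-3g-15}{g(g^2-g-16)},\quad \beta = \frac{(g-1)(g-3)(g-5)}{g(g^2-g-16)},\quad \gamma = \frac{6(g-1)(g-5)}{g(g^2-g-16)}.
\]

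The remaining and only delicate step is to check $\alpha, \beta, \gamma \geq 0$ for all odd $g \geq 5$. The denominator factor $g^2-g-16$ equals $4$ at $g=5$ and is increasing for $g \geq 1$, so it is positive; the numerator of $\alpha$, namely $2g^2-3g-15$, has its positive root below $4$, so it is also positive on the relevant range. The expressions for $\beta$ and $\gamma$ carry the factor $(g-5) \geq 0$, which is the main subtlety: at $g=5$ one has $\beta = \gamma = 0$ and $\alpha = 1$, consistent with the direct observation that $g^2-1 = 6g-6 = 24$ when $g=5$, so $w_T(\C^+)$ itself already equals the barycenter. For $g \geq 7$ every numerator factor is manifestly positive. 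I expect no deeper obstacle: once the $\ZZ_2$-reduction is in place the claim is elementary two-variable linear algebra and positivity bookkeeping. The genuine content of the lemma lies not in this verification but in the fact, proved in Section~\ref{S:monomial-bases}, that the three bases $\C^+$, $\C^-$, $\C^\star$ can be constructed at all, and are arranged so that their $T$-states enclose the barycenter.
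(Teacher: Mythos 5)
Your proposal is correct and takes essentially the same route as the paper: reduce to the three effective coordinates via the $\ZZ_2$-symmetry and the constancy on $i\neq 0,k,2k$, pin down the ratio $\gamma:\beta = 6:(g-3)$, and verify positivity with $(g-5)$ as the governing factor. The paper organizes the bookkeeping by working modulo $\sum_{i}x_i$ and exhibiting the auxiliary combination $L = 6\,w_T(\C^\star)+(g-3)\,w_T(\C^-)$ rather than solving for $(\alpha,\beta,\gamma)$ in closed form, but the underlying computation and the final positivity check are the same.
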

\begin{proof}
Equivalently, we may show that the $0$-state is an effective linear combination of $w_T(\mathcal C^+)$, $w_T(\mathcal C^-)$, 
and $w_T(\mathcal C^\star)$ modulo $\sum_{i=0}^{2k} x_i$. 
We have
\begin{align*}
w_T(\mathcal C^+) &= (g-5)(g-1)(x_0 + x_k + x_{2k}) \pmod{ \sum_{i=0}^{2k} x_i }\, ,\\
w_T(\mathcal C^-) &= -(2g-6)(x_0 + x_{2k}) -(2g-3) x_k \pmod{ \sum_{i=0}^{2k} x_i }\, ,\\
w_T(\mathcal C^\star) &= - \frac{3g - 11}{2}(x_0+x_{2k}) - (g - 4) x_k \pmod{ \sum_{i=0}^{2k} x_i }\, .
\end{align*}
Form a positive linear combination $L$ of the last two lines as follows:
\begin{align*}
L &:= 6 w_T(\mathcal C^\star) + (g-3) w_T(\mathcal C^{-}) \\
&= -(2g^2-3g-15)(x_0 + x_k + x_{2k}) \pmod{ \sum_{i=0}^{2k} x_i }.
\end{align*}
Plainly, the $0$-state is a positive linear combination of $w_T(\mathcal C^+)$ and $L$.  
\end{proof}

\section{Construction of monomial bases of cosyzygies}
\label{S:monomial-bases}
 
In the remainder of this paper, we establish the existence of the three monomial bases of cosyzygies $\C^{+}, \C^{-}$, and $\C^{\star}$,
used in the proof of Theorem \ref{T:ribbon-stability}. 
This is done in Subsections \ref{first-basis}, \ref{second-basis}, and \ref{third-basis},
respectively.

\subsubsection*{Notation} Throughout this section, we use the following notation. 
We define the \emph{$u$-degree} of a cosyzygy $(x_a\wedge x_b)\otimes x_{c}$ 
to be $a+b+c$ and define the \emph{level} of a tensor $x_a\otimes x_bx_c$ 
to be $a$. By a slight abuse of notation, we often write $(x_a\wedge x_b)\otimes x_{c}$
to denote its image under $f_{2,1}$ in $\HH^0(\omega)\otimes \HH^0(\omega^2)$.

For $\alpha \in \QQ$, set $\{\alpha\} = \left\lfloor \alpha + \frac{1}{2} \right\rfloor$.
In other words, $\{\alpha\}$ is the integer closest to $\alpha$. 
Observe that for $n \in \ZZ$, we have
\[ n = \lfloor n/3 \rfloor + \{n/3\} + \lceil n/3 \rceil.\]
We use $\langle S\rangle$ to denote the linear span of elements in a subset $S$ of a vector space.

\subsubsection*{Outline of the construction}
We first describe our strategy for constructing monomial bases of cosyzygies. 
Recall from Definition \ref{D:monomial-basis} that a set $\C=\{(x_a \wedge x_b) \otimes x_c\}_{(a,b,c)\in S}\subset \Gamma$ of $(3g-5)(g-1)$ 
cosyzygies is a monomial basis of cosyzygies if and only if the images $f_{2,1}\bigl((x_a \wedge x_b) \otimes x_c\bigr)$, for 
$(a,b,c)\in S$, span $\ker f_{1,2}$.
The first step in our construction is to write down a set $\C$ of $(3g-5)(g-1)$ cosyzygies. We do this heuristically. 

Next, we make the following observation. Since $\operatorname{im} f_{2,1} \subseteq \ker f_{1,2}$ and 
$f_{1,2}$ is surjective onto $\HH^0(\omega^3)$,
to prove that the images of the cosyzygies in $\C$ span $\ker f_{1,2}$, it suffices to show that 
\[
\dim \left(\HH^0(\omega)\otimes \HH^0(\omega^2)\right) \big/ \langle f_{2,1}((x_a\wedge x_b)\otimes x_{c}) \rangle_{(a,b,c)\in S} \ \leq  \ 
\dim \HH^0(\omega^3)=5(g-1).
\]
In order to do this, we treat
\[
f_{2,1}\bigl((x_a\wedge x_b)\otimes x_{c}\bigr)=x_b\otimes x_a x_c- x_a \otimes x_b x_c
\]
as a relation among the elements of $\HH^0(\omega)\otimes \HH^0(\omega^2)$.
We therefore reduce to showing that the relations imposed by $\C$ reduce the dimension of
$\HH^0(\omega)\otimes \HH^0(\omega^2)$ to at most $5(g-1)$. 

The final observation is that all of our results and constructions are $\GG_m$-invariant.
In particular, we can run our argument degree by degree. This observation greatly simplifies our task 
because the relevant weight spaces have small dimensions. 
In particular, by Proposition \ref{thm:weight_decomposition_wm}
we have
\begin{equation}\label{E:cubic-weight-spaces}
\dim \HH^0(\omega^3)_d=
\begin{cases} 
1 & \text{if $0\leq d \leq k$ or $5k\leq d\leq 6k$,} \\ 
2 & \text{if $k < d < 5k$.}
\end{cases}
\end{equation}

\subsection{A construction of the first monomial basis}
\label{first-basis}
We define $\mathcal{C}^{+}$ to be the union of the following sets of cosyzygies:

\begin{enumerate}[label=\textbf{(T\arabic*)}] 
\item \label{a1} 

$(x_0 \wedge x_i)\otimes x_j$,        			
where $i\neq 0, 2k$ and $j\neq 2k$. 

\item \label{a2}

$(x_0 \wedge x_{i}) \otimes x_{2k}$, 		
where $1\leq i\leq k-1$. 

\item \label{a3}

$(x_0 \wedge x_{2k}) \otimes x_i$, 
where $i\leq k-1$. 


\item \label{a4}

$(x_{2k} \wedge x_{i})\otimes x_{j}$,  
where  $i\neq 0, 2k$ and $j\neq 0$. 

\item  \label{a5} 

$(x_{2k} \wedge x_{0}) \otimes x_i$, $i\geq k+1$. 

\item  \label{a6}

$(x_{2k} \wedge x_{i}) \otimes x_{0}$, $k+1\leq i\leq 2k-1$. 


\item  \label{a7}

 $(x_k \wedge x_i) \otimes x_j$, where $i\neq 0, k, 2k$ and $j\neq 0, 2k$. 

\item  \label{a8} 

$(x_k\wedge x_0) \otimes x_{2k}$ and $(x_k\wedge x_{2k}) \otimes x_0$. 

\item   \label{a9}

$(x_{i}\wedge x_{k+i}) \otimes x_{k-i}$ 
where $1\leq i \leq k-1$. 

\item   \label{a10} 

$(x_{2k-i}\wedge x_{k-i}) \otimes x_{k+i}$, $1\leq i \leq k-1$ 
\end{enumerate}


\begin{prop}\label{P:first-basis}
$\mathcal{C}^{+}$ is a monomial basis of cosyzygies with $T$-state 
\[
w_T(\mathcal{C}^+)=(g^2-1)(x_0+x_k+x_{2k})+(6g-6)\sum_{i\neq 0,k,2k} x_i.
\]
\end{prop}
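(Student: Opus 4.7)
The plan is to establish three facts: that $|\mathcal{C}^+| = (3g-5)(g-1)$, that the $T$-state equals the claimed formula, and that $\{f_{2,1}((x_a \wedge x_b) \otimes x_c)\}_{(a,b,c) \in S}$ spans $\ker f_{1,2}$.

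First, I would sum the cardinalities of types T1 through T10. The total comes to
\[
2(2k-1)(2k) + (2k-2)(2k-1) + 4(k-1) + 2k + 2 = (6k-2)(2k) = 4k(3k-1),
\]
which equals $(3g-5)(g-1)$ since $g = 2k+1$. Next, for the $T$-state, I would count how often each $x_i$ appears among the three factors of a cosyzygy in $\mathcal{C}^+$. The $\ZZ_2$-symmetry of Remark~\ref{R:symmetry} pairs T1 with T4, T2 with T5, T3 with T6, and T9 with T10, while fixing T7 and T8 setwise, so $n_0 = n_{2k}$ is automatic. A direct tally then yields $n_0 = n_k = n_{2k} = g^2-1$ and $n_i = 6g-6$ for $i \notin \{0,k,2k\}$, which matches the claimed expression for $w_T(\mathcal{C}^+)$.

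The main step is proving that $f_{2,1}(\mathcal{C}^+)$ spans $\ker f_{1,2}$. Following the outline at the start of Section~\ref{S:monomial-bases}, it suffices to bound
\[
\dim\bigl((\HH^0(\omega)\otimes \HH^0(\omega^2))/\langle f_{2,1}(\mathcal{C}^+)\rangle\bigr) \leq 5(g-1).
\]
By $\GG_m$-equivariance this reduces to the weight-graded bound
\[
\dim\bigl((\HH^0(\omega)\otimes \HH^0(\omega^2))_d/\langle f_{2,1}(\mathcal{C}^+)\rangle_d\bigr) \leq \dim \HH^0(\omega^3)_d
\]
for each $u$-weight $d \in \{0,1,\dots,6k\}$, with the right-hand side given by~(\ref{E:cubic-weight-spaces}).

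To carry this out, I would write every tensor as $x_a \otimes b$ with $b \in \B^+$ (the basis from Proposition~\ref{P:quadratic-bases}) and exploit the relations of $\mathcal{C}^+$, which take the form $x_\beta \otimes x_\alpha x_\gamma \equiv x_\alpha \otimes x_\beta x_\gamma$, to reduce to a canonical form. Types T1 and T4 supply the bulk of the reductions, trading tensors at intermediate levels $i \notin \{0, k, 2k\}$ for tensors at levels $0$ and $2k$. Types T2, T3, T5, T6 handle the boundary cases in which T1 or T4 fail, for instance when a second factor $x_0 x_{2k}$ must be re-expanded via Lemma~\ref{L:quadratic}. Types T7 and T8 perform the analogous reductions through level $k$. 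Finally, types T9 and T10, which live at $u$-weights $2k+i$ and $4k-i$ respectively for $1 \leq i \leq k-1$, supply the extra relations required in the middle weight range. A case-by-case analysis by $u$-weight, using the $\ZZ_2$-symmetry to reduce to $d \leq 3k$, verifies the required dimension bound; the small weights $d \leq k$ and the "boundary" weights $k < d \leq 2k$ come out easily from the T1--T6 reductions alone, and the symmetric analysis gives $d \geq 4k$.

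The main obstacle lies in the middle weights $k < d < 5k$, where $\dim\HH^0(\omega^3)_d = 2$ and the ambient weight space is largest. There, multiple basis elements of $\HH^0(\omega^2)$ enter each tensor, relations from different types interact nontrivially, and one must verify that precisely the right two classes survive. Types T9 and T10, though few in number, play a critical role here: they supply relations that are not captured by the boundary types T1 through T8 and are essential to obtain the sharp dimension bound.
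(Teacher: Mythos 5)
Your counting of $|\mathcal{C}^+|$ and the $T$-state computation are correct, and your overall strategy is exactly the paper's: reduce to the weight-graded bound $\dim(\HH^0(\omega)\otimes\HH^0(\omega^2))_d/\langle\C^+\rangle_d\leq\dim\HH^0(\omega^3)_d$, work with the basis obtained by tensoring $\{x_0,\dots,x_{2k}\}$ with $\B^+$, and use the $\ZZ_2$-symmetry to restrict to $d\leq 3k$. However, the heart of the matter is left unexecuted. In the range $2k<d<3k$ (write $d=2k+i$) the relations \ref{a1}--\ref{a7} only get you down to tensors of level $0$, $k$, or $2k$, which a priori span a space of dimension up to $5$, not $2$; the paper's proof closes this gap by showing every such tensor reduces to a tensor of level $i$, and the one genuinely non-routine step is the reduction of $x_0\otimes x_kx_{k+i}$: one must first rewrite $x_0x_k=x_ix_{k-i}$ in $\HH^0(\omega^2)$ (both restrict to $u^k$ since all indices are $\leq k$) and only then can the cosyzygy \ref{a9}, $(x_i\wedge x_{k+i})\otimes x_{k-i}$, be brought to bear. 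Your proposal asserts that \ref{a9}--\ref{a10} "supply the extra relations required" but never exhibits this mechanism, nor does it identify which two classes survive (tensors of level $i$, bounded by $\dim\HH^0(\omega^2)_{2k}=2$). Similarly, the degree $d=3k$ case needs the two cosyzygies \ref{a8} specifically to identify the level-$0$, level-$k$, and level-$2k$ tensors, and this is not addressed beyond a general remark.

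Two smaller points: your claim that the weights $k<d\leq 2k$ "come out easily from the T1--T6 reductions alone" is not quite right --- the tensors $x_b\otimes x_kx_{d-k-b}$ with $b\geq 1$ require \ref{a7} (together with Lemma~\ref{L:quadratic}) before \ref{a1} can finish the job, since $x_kx_{d-k-b}$ and $x_0x_{d-b}$ are distinct sections in this range. And for $d\leq k$ the bound one needs (and gets) is dimension $\leq 1$, not $\leq 2$; your write-up never states which single class survives there (a level-$0$ tensor). As it stands the proposal is a faithful plan of the paper's argument with the decisive middle-degree computations replaced by declarations of intent, so it does not yet constitute a proof of the spanning statement.
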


\begin{proof}
Notice that $\mathcal{C}^+$ contains precisely $(3g-5)(g-1)$ cosyzygies and that it is invariant under the 
$\ZZ_2$-involution of the ribbon.

To calculate the $T$-state of $\mathcal{C}^+$, observe that $x_0$, $x_k$, $x_{2k}$ each appear $g^2-1$ times,
and $x_i$, for every $i\neq 0,k,2k$, appears $6g-6$ times.
It follows that 
\[
w_T(\mathcal{C}^+)=(g^2-1)(x_0+x_k+x_{2k})+(6g-6)\sum_{i\neq 0,k,2k} x_i.
\]

We now verify that $\mathcal{C}^+$ is a monomial basis of cosyzygies. In view of the $\ZZ_2$-symmetry
and the dimensions of $\HH^0(\omega^3)_d$ from \eqref{E:cubic-weight-spaces},
we only need to verify that the quotient space 
$\bigl(\HH^0(\omega)\otimes \HH^0(\omega^2)\bigr)_d/ \langle \mathcal{C}^{+}\rangle_{d}$ 
is at most one-dimensional in $u$-degrees $0\leq d\leq k-1$, and at most two-dimensional in $u$-degrees $k\leq d \leq 3k$.

The key player in our argument is the monomial basis $\B^+$ from Proposition \ref{P:quadratic-bases}:
\begin{equation}\label{E:B+}
\mathcal{B^{+}}=\left\{\{x_0x_i\}_{i=0}^{2k}, \ \{x_kx_i\}_{i=1}^{2k-1}, \ \{x_{2k}x_i\}_{i=1}^{2k}\right\}.
\end{equation}
Tensoring $\mathcal{B}^{+}$ with the standard basis $\{x_0, \dots, x_{2k}\}$ of $\HH^0(\omega)$,  
we obtain the following basis of $\HH^0(\omega)\otimes \HH^0(\omega^2)$:
\[
\B:=\{x_a \otimes \mathfrak{m} \co 0\leq a \leq 2k, \mathfrak{m}\in \mathcal{B}^{+}\}
\]

Our argument now proceeds by $u$-degree:

\subsubsection*{Degree $0\leq d\leq k$.} 
We have $\langle \B \rangle_d=\langle x_a\otimes x_0x_{d-a}\co  \text{where $0 \leq a\leq d$}\rangle$.
Evidently, we have $x_ax_{d-a}=x_0x_d$ in $\HH^0(\omega^2)$. It follows that  
\[
x_a\otimes x_0x_{d-a}=x_0\otimes x_ax_{d-a}+(x_0\wedge x_a)\otimes x_{d-a} = x_0\otimes x_0x_{d}  +(x_0\wedge x_a)\otimes x_{d-a},
\]
where $(x_0\wedge x_a)\otimes x_{d-a}$ is a cosyzygy \ref{a1}.
We conclude that $\langle \B\rangle_d/\langle\mathcal{C}^{+}\rangle_d$
is spanned by $x_0\otimes x_0x_{d}$, hence is at most one-dimensional.

\subsubsection*{Degree $k+1\leq d\leq 2k$.} 

We have \[
\langle \B  \rangle_d=\langle x_a\otimes x_0x_{d-a},\ x_b\otimes x_kx_{d-k-b}\co 0\leq a\leq d, \ 0\leq b< d-k\rangle.
\] 
If $b\geq 1$, using the cosyzygies \ref{a7} and \ref{a1} and Lemma \ref{L:quadratic}, we obtain
\begin{align*}
x_b\otimes x_kx_{d-k-b} &=x_k\otimes x_bx_{d-k-b}+(x_k\wedge x_b)\otimes x_{d-k-b} \\
&= x_k\otimes x_0x_{d-k}  + (x_k\wedge x_b)\otimes x_{d-k-b} \\
&=x_0\otimes x_k x_{d-k} + (x_0\wedge x_k)\otimes x_{d-k} + (x_k\wedge x_b)\otimes x_{d-k-b}.
\end{align*}
Using \ref{a1}, we also have  
\begin{align*}
x_a\otimes x_0x_{d-a}&=x_0\otimes x_ax_{d-a}+(x_0\wedge x_a)\otimes x_{d-a},
\end{align*}
It follows that 
$\langle \B \rangle_d/\langle\mathcal{C}^{+}\rangle_d=\langle  x_0\otimes x_ax_{d-a}\co 0\leq a\leq d \rangle/\langle\mathcal{C}^+\rangle_d$.
In other words, every tensor of $u$-degree $d$ is reduced to a tensor of level $0$.
Since
$\dim \langle  x_0\otimes x_ax_{d-a}\co 0\leq a\leq d \rangle =\dim \HH^0(\omega^2)_d=2$, we are done.


\subsubsection*{Degree $2k+1\leq d\leq 3k-1$.} Write $d=2k+i$, $1\leq i \leq k-1$. 
It is easy to see that modulo $\C^{+}$, every tensor in $\bigl(\HH^0(\omega)\otimes \HH^0(\omega^2)\bigr)_d$ 
can be reduced to a tensor of level $0$, $k$, or $2k$, by using cosyzygies \ref{a1}--\ref{a4}
or \ref{a7}. In other words,
\[
\langle \B \rangle_d /\langle \C^{+} \rangle_d 
=
\langle
x_0\otimes x_k x_{k+i}, \ x_0\otimes x_{2k} x_{i}, \ x_k\otimes x_{0}x_{k+i}, \ x_{k}\otimes x_{k}x_{i}, \ x_{2k}\otimes x_0x_{i} \rangle
/ \langle \C^+\rangle_d.
\]

Since $\dim \langle x_i\otimes x_ax_{2k-a}\co 0\leq a\leq 2k\rangle=\dim \HH^0(\omega^2)_{2k}=2$, it suffices to show that every tensor in the above display
can be rewritten modulo $\C^{+}$ as a tensor of level $i$. First, we observe that
\begin{align*}
x_{2k}\otimes x_0x_{i}  &=x_0 \otimes x_{2k}x_{i}+(x_{0}\wedge x_{2k}) \otimes x_{i} \qquad \ \text{(using \ref{a5} cosyzygy), }\\
x_0\otimes x_{i} x_{2k} &=x_i\otimes x_0x_{2k}-(x_0\wedge x_i)\otimes x_{2k} \qquad \ \text{(using \ref{a2} cosyzygy), } \\
x_{k}\otimes x_{i}x_{k} &=x_i \otimes x_k^2-(x_k\wedge x_i)\otimes x_k \qquad \qquad \text{(using \ref{a7} cosyzygy). } \\
\end{align*}
Since $x_k\otimes x_{0}x_{k+i}=x_0\otimes x_{k}x_{k+i}+(x_0\wedge x_{k})\otimes x_{k+i}$, it
remains to show that 
$x_0\otimes x_k x_{k+i}$ can be rewritten as a tensor of level $i$.  
To this end, we compute
\begin{align*}
x_0\otimes x_k x_{k+i} &=x_{k+i}\otimes x_0x_k -(x_0\wedge x_{k+i})\otimes x_{k}
=x_{k+i}\otimes x_ix_{k-i} -(x_0\wedge x_{k+i})\otimes x_{k}
\\
&=x_{i}\otimes x_{k+i}x_{k-i}+(x_i\wedge x_{k+i})\otimes x_{k-i}-(x_0\wedge x_{k+i})\otimes x_{k},
\end{align*}
where we have used a cosyzygy \ref{a9} in the second line.

\subsubsection*{Degree $d=3k$.} Using cosyzygies \ref{a1}, \ref{a4}, and \ref{a7},
every tensor in $\langle \B \rangle_{3k}$ reduces to a tensor of level $0$, $k$, or $2k$. 
It follows that 
\[
\langle \B \rangle_{3k} /\langle \C^{+} \rangle_{3k}
=
\langle
x_0\otimes x_k x_{2k}, \ 
x_{k}\otimes x_{0}x_{2k}, \ x_{k}\otimes x_{k}^2, \
x_{2k}\otimes x_{0}x_{k}  \rangle
/ \langle \C^+\rangle_{3k}.
\]
Using cosyzygies \ref{a8}, we see that $x_0\otimes x_k x_{2k}=x_{k} \otimes x_{0}x_{2k}$
and $x_{2k}\otimes x_{0}x_{k}=x_{k}\otimes x_{0}x_{2k}$ modulo $\C^{+}$. It follows
that $\langle \B \rangle_{3k}/ \langle \C^+\rangle_{3k}$ is spanned by tensors 
of level $k$, hence is at most two-dimensional.
\end{proof}

\subsection{A construction of the second monomial basis}
\label{second-basis}
We define $\mathcal{C}^{-}$ to be the union of the following sets of cosyzygies:
\begin{enumerate} [label=\textbf{(T\arabic*)}] 
\item\label{b_even}
  $(x_i \wedge x_j) \otimes x_j$, where $i \not \in \{j-k-1, j-k, j, j+k, j+k+1\}$.
\item\label{blow_odd}
 $(x_i \wedge x_{j+1}) \otimes x_j$, where $i > j+1$ or $i = j-k+1$, but $i \neq j+k$ and $i \neq j+k+1$.
\item\label{bhigh_odd}
  $(x_i \wedge x_{j-1}) \otimes x_j$, where $i < j-1$ or $i = j+k-1$, but $i \neq j-k$ and $i \neq j-k-1$.
\item \label{kblow_even}
 $(x_i \wedge x_j) \otimes x_{j+k}$, where $0 < j < k$ and $i \geq k$.
\item \label{kblow_odd}
  $(x_i \wedge x_j) \otimes x_{j+k+1}$, where $0 \leq j < k$ and $i \geq k$.
\item \label{kbhigh_even}
  $(x_i \wedge x_{j+k}) \otimes x_j$, where $0 < j < k$ and $i < k$.
\item \label{kbhigh_odd}
  $(x_i \wedge x_{j+k+1}) \otimes x_j$, where $0 \leq j < k$ and $i < k$.
\item \label{funny_k}
  $(x_k \wedge x_0) \otimes x_0$
\item \label{funny_5k}
  $(x_k \wedge x_{2k}) \otimes x_{2k}$
\item \label{funny}
  $(x_{\lfloor (d-2k)/3 \rfloor} \wedge x_{\lceil (d+2k)/3 \rceil}) \otimes x_{d- \lfloor (d-2k)/3 \rfloor - \lceil (d+2k)/3 \rceil}$, 
  where $2k \leq d \leq 4k$ with the following exception: If $k \equiv 1 \pmod 3$ and $d = 2k$, 
  then take instead \\ $(x_0 \wedge x_{\lfloor 4k/3 \rfloor}) \otimes x_{\lceil 2k/3 \rceil}$.
\end{enumerate}

  The construction of $\mathcal{C}^-$ is motivated by the following basis of $\HH^0(\omega^2)$ from Proposition \ref{P:quadratic-bases}:
  \begin{equation*}
     \mathcal{B}^{-} = \left\{ \{x_j^2\}_{j=0}^{2k}, \{x_jx_{j+1}\}_{j=0}^{2k-1}, \{x_jx_{j+k}\}_{j=1}^{k-1}, \{x_jx_{j+k+1}\}_{j=0}^{k-1}\right\}.
   \end{equation*}
   After tensoring with $\{x_0, \dots, x_{2k}\}$, the basis above yields  the  basis of $\HH^0(\omega) \otimes \HH^0(\omega^2)$ given by
   \begin{equation*}
     \B := \{ x_i \otimes \mathfrak{m} \mid 0 \leq i \leq 2k,\ \mathfrak{m} \in \mathcal{B}^-\}.
   \end{equation*}


\begin{prop}\label{P:second-basis}
  $\mathcal{C}^{-}$ is a monomial basis of cosyzygies with $T$-state
  \[
    w_T(\mathcal C^-) = (7g-12)(x_0+x_{2k}) + (7g-15) x_k + (9g-18)
    \sum_{i \neq 0,k,2k} x_i.
\]
\end{prop}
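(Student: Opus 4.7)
The plan is to mimic the approach used for $\mathcal{C}^+$ in Proposition~\ref{P:first-basis}. First I would verify that $|\mathcal{C}^-|=(3g-5)(g-1)$ and compute its $T$-state; then, using $\GG_m$-equivariance, I would show $u$-degree by $u$-degree that the quotient $\bigl(\HH^0(\omega)\otimes\HH^0(\omega^2)\bigr)_d\big/\langle \mathcal{C}^-\rangle_d$ has dimension at most $\dim \HH^0(\omega^3)_d$, which by \eqref{E:cubic-weight-spaces} is $1$ for $0\leq d\leq k$ and $5k\leq d\leq 6k$, and $2$ otherwise. By the dimension argument at the start of Section~\ref{S:monomial-bases}, this suffices.

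The count and the $T$-state reduce to mechanical enumeration of each range in \textbf{(T1)}--\textbf{(T10)}, keeping track of which of the excluded indices actually lie in $[0,2k]$. Summing the occurrences of each $x_i$ should yield $7g-12$ at $x_0$ and $x_{2k}$, $7g-15$ at $x_k$, and $9g-18$ elsewhere; the small deviations at $x_0, x_k, x_{2k}$ come from asymmetric boundary exclusions in \textbf{(T1)}--\textbf{(T7)} and are corrected by the corner cosyzygies \textbf{(T8)}, \textbf{(T9)}. A final sanity check compares the total $T$-weight with $3(3g-5)(g-1)$.

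For the span argument, I tensor $\mathcal{B}^-$ with $\{x_0,\ldots,x_{2k}\}$ to obtain a basis $\B=\{x_i\otimes\mathfrak{m}\}$ of $\HH^0(\omega)\otimes\HH^0(\omega^2)$ and work in each weight space separately. Types \textbf{(T1)}--\textbf{(T3)} are \emph{level-shifters}: the relation $f_{2,1}\bigl((x_i\wedge x_j)\otimes x_j\bigr)=x_j\otimes x_ix_j - x_i\otimes x_j^2$ trades a tensor of level $i$ for one of level $j$, and \textbf{(T2)}, \textbf{(T3)} do the analogue for $x_jx_{j\pm 1}$. After applying these, every basis tensor is supported on a restricted set of levels; \textbf{(T4)}--\textbf{(T7)} then eliminate the $k$- and $(k+1)$-gap quadratic factors. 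Using Lemma~\ref{L:quadratic} to rewrite residual products back in terms of $\mathcal{B}^-$ and iterating, only a handful of survivor tensors remain in each weight space.

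The hardest part is the middle range $2k\leq d\leq 4k$, where after \textbf{(T1)}--\textbf{(T7)} three candidate survivors typically remain and one additional relation is needed to cut the quotient down to dimension $2$. This is precisely the role of the \emph{funny} cosyzygy \textbf{(T10)}: its triple of indices is chosen so that all three factors lie in $[0,2k]$ and the resulting relation is independent of those already imposed. The exception at $k\equiv 1\pmod 3$, $d=2k$ reflects a degeneracy of the default choice, and the substitute $(x_0\wedge x_{\lfloor 4k/3\rfloor})\otimes x_{\lceil 2k/3\rceil}$ must be checked to give an independent relation. Corners near $d=0,k$ and $d=5k,6k$ are handled by \textbf{(T8)}, \textbf{(T9)} together with the limiting cases of \textbf{(T1)}. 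The main technical burden is to verify, via Lemma~\ref{L:quadratic}, that the coefficients emerging from each reduction step are non-vanishing in the right places, so that the surviving space really has the claimed dimension.
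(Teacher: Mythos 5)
Your outline matches the paper's strategy at a high level (count, state, degree-by-degree reduction using $\B^-\otimes\{x_i\}$, the level-shifting role of \textbf{(T1)}--\textbf{(T3)}, the elimination of $k$-balanced factors via \textbf{(T4)}--\textbf{(T7)}, and the extra relations from \textbf{(T8)}--\textbf{(T10)} in degrees $k$, $5k$, and $2k\leq d\leq 4k$), but it stops exactly where the real difficulty begins, and the step you defer is not routine. The assertion that "iterating" Lemma~\ref{L:quadratic} leaves "only a handful of survivor tensors" is the content of Proposition~\ref{P:markov_chain}, and the naive iteration does not obviously terminate: a balanced tensor $x_i\otimes x_sx_\ell$ rewrites, via a \textbf{(T1)}--\textbf{(T3)} move followed by Lemma~\ref{L:quadratic} and a \textbf{(T4)}--\textbf{(T7)} move, as a combination $\lambda\tau_1+\mu\tau_2$ of \emph{two} balanced tensors, only one of which is demonstrably closer to an absorbing shape; the other can wander among well-balanced tensors indefinitely. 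The paper resolves this by organizing the rewriting rules as an absorbing Markov chain on the set of well-balanced tensors (with $\lambda,\mu\geq 0$, $\lambda+\mu=1$) and taking the limit $\lim_{n\to\infty}P^n$, which is what actually proves that each middle-degree weight space is spanned, modulo \textbf{(T1)}--\textbf{(T7)}, by exactly the three tensors of Types 1--3. Your plan contains no substitute for this convergence argument.

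The second, related gap is your claim that the relation imposed by the funny cosyzygy \textbf{(T10)} (and by \textbf{(T8)}, \textbf{(T9)}) "is independent of those already imposed." Independence is precisely what must be proved, and in the paper it rests on quantitative sign information: when a balanced tensor of level $i$ is expressed as $\alpha\sigma_1+\beta\sigma_2+\gamma\sigma_3$ in the Type 1--3 generators, one has $\alpha>0$ in the well-balanced range $\lfloor(d-2k)/3\rfloor< i<\lceil(d+2k)/3\rceil$ and $\alpha<0$ when $i$ overshoots $\lceil(d+2k)/3\rceil$ (Proposition~\ref{P:markov_chain}, Part 2). Comparing the two sides of the \textbf{(T10)} relation then shows the coefficient of $\sigma_1$ (or of $\sigma_2$ versus $\sigma_3$) cannot cancel, so the relation is nontrivial; the same mechanism handles the exceptional choice at $k\equiv 1\pmod 3$, $d=2k$. "Non-vanishing in the right places" via Lemma~\ref{L:quadratic} alone does not yield these sign constraints, because they propagate through arbitrarily long reduction chains; the non-negativity of the Markov operator plus an induction on the level is what makes them available. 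Until you supply both the convergence argument and this sign analysis (or genuine alternatives), the proposal establishes the count and the $T$-state but not that $\mathcal{C}^-$ spans $\ker f_{1,2}$, which is the substance of the proposition.
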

\begin{remark}
  The exception for $k \equiv 1 \pmod 3$ and $d = 2k$ in \ref{funny} is only to get the correct $T$-state for $\mathcal C^-$. 
  One obtains a monomial basis regardless.
\end{remark}
\begin{proof}
  Counting cosyzygies of each type in $\mathcal C^-$,
  we get $12k^2-4k = (3g-5)(g-1)$ cosyzygies. The state calculation is also straightforward.

Let $\Lambda$ be the span 
in $\HH^0(\omega) \otimes  \HH^0(\omega^2)$ of all cosyzygies in $\C^{-}$
and let $\Lambda'$ be the span 
in $\HH^0(\omega) \otimes \HH^0(\omega^2)$ of the cosyzygies \ref{b_even}--\ref{kbhigh_odd}.
The relations given by $\Lambda$ reduce a tensor in $\B$ to a different tensor. For example, modulo \ref{b_even} we have
\[ x_i \otimes x_j^2  = x_j \otimes x_ix_j.\]
Our goal is to show that the quotient $\left(\HH^0(\omega) \otimes  \HH^0(\omega^2)\right)/\Lambda$
is generated by at most one element in degrees $0 \leq d \leq k$ and $5k \leq d \leq 6k$, and by at most two elements in degrees $k < d < 5k$. 
Proposition~\ref{P:markov_chain} does most of the heavy lifting towards this goal and, for the sake of the argument,
we assume its statement for now. 

By Proposition~\ref{P:markov_chain} and Remark~\ref{rem:sinks_in_d}, $\left(\HH^0(\omega) \otimes \HH^0(\omega^2)\right)/\Lambda'$ 
is generated by one element in degrees $0 \leq d < k$ and $5k < d \leq 6k$, by two elements 
in degrees $k \leq d < 2k$ and $4k < d \leq 5k$ and by three elements in degrees $2k \leq d \leq 4k$. 
Therefore to complete the argument, it suffices to prove that the cosyzygies \ref{funny_k} and \ref{funny_5k}
impose nontrivial linear relations on the two generators in degree $k$ and $5k$, respectively,
and that the cosyzygy \ref{funny} imposes a nontrivial linear relation on the three generators in degrees $2k \leq d \leq 4k$.

Let $d = k$. The two generators of $\left(\HH^0(\omega) \otimes \HH^0(\omega^2)\right)/\Lambda'$ in this degree are
\begin{align*}
\sigma_1 &:= x_{\{k/3\}} \otimes x_{\lfloor k/3 \rfloor}x_{\lceil k/3 \rceil}, \text{ and }\\
    \sigma_2 &:= x_k \otimes x_0^2.
  \end{align*}
  The relation imposed by \ref{funny_k} is
  \[ x_k \otimes x_0^2 = x_0 \otimes x_0 x_k.\]
  It is easy to see that modulo \ref{b_even}, \ref{blow_odd}, and \ref{bhigh_odd}, we have
  \[ x_0 \otimes x_0 x_k = x_0 \otimes x_{\lfloor k/2 \rfloor}x_{\lceil k/2 \rceil} = \sigma_1.\]
  Therefore, \ref{funny_k} imposes the nontrivial relation
  \[ \sigma_2 = \sigma_1.\]

  The case of $d = 5k$ follows symmetrically.
  
  Let $2k \leq d \leq 4k$. The three generators of $\left(\HH^0(\omega) \otimes \HH^0(\omega^2)\right)/\Lambda'$ in degree $d$ are
  \begin{align*}
    \sigma_1 &:= x_{\{d/3\}} \otimes x_{\lceil d/3 \rceil}x_{\lfloor d/3 \rfloor},\\
    \sigma_2 &:= x_{\lceil (d+2k)/3 \rceil} \otimes x_{\lfloor (d-k)/3 \rfloor} x_{\{ (d-k)/3\}}, \text{ and }\\
    \sigma_3 &:= x_{\lfloor (d-2k)/3 \rfloor} \otimes x_{\{ (d+k)/3 \}} x_{\lceil (d+k)/3\rceil}.
  \end{align*}
  For brevity, set $\ell = \lceil (d+2k)/3 \rceil$ and $s = \lfloor (d-2k)/3 \rfloor$. The relation imposed by \ref{funny} is
  \begin{equation} \label{eqn:funny}
    x_\ell \otimes x_s x_{d-\ell-s} = x_s \otimes x_\ell x_{d-\ell-s}.
  \end{equation}
  Assume that $d \leq 3k$; the case of $d \geq 3k$ follows symmetrically. Since $d \leq 3k$, we have
  \[ s < \lfloor (d-k)/3\rfloor \leq \{(d-k)/3 \} < d-\ell-s \leq k.\]
  On the left hand side of \eqref{eqn:funny}, we have by Lemma \ref{L:quadratic}
  \begin{align*}
    x_\ell \otimes x_s x_{d-\ell-s} &= x_\ell \otimes x_{\lfloor (d-k)/3 \rfloor}x_{\{(d-k)/3\}}\\
    &= \sigma_2.
  \end{align*}
    On the right hand side of \eqref{eqn:funny}, working modulo \ref{kbhigh_even}--\ref{kbhigh_odd}, and applying Lemma \ref{L:quadratic}, we get
  \begin{align*}
    x_s \otimes x_\ell x_{d-\ell-s} &= \lambda x_{s} \otimes x_{\{(d+k)/3\}}x_{\lceil(d+k)/3\rceil} + \mu x_s \otimes x_{\lfloor (d-s-k)/2 \rfloor} x_{\lceil (d-s+k)/2 \rceil}\\
    &= \lambda \sigma_3 + \mu x_{\lceil (d-s+k)/2 \rceil} \otimes m, \text{ where $m$ is balanced},\\
    &= \lambda \sigma_3 + \mu(\alpha \sigma_1 + \beta \sigma_2 + \gamma \sigma_3),
  \end{align*}
  where the last step uses Proposition~\ref{P:markov_chain}. Furthermore,
  since $\lceil (d-s+k)/2 \rceil > \lceil (d+2k)/3 \rceil$, Proposition~\ref{P:markov_chain} (Part 2(c)) implies that $\alpha < 0$. Thus, \ref{funny} imposes
  the relation
  \[ \sigma_2 = \lambda \sigma_3 + \mu(\alpha \sigma_1 + \beta \sigma_2 + \gamma \sigma_3).\]
  If $\mu = 0$, then this relation is clearly nontrivial. If $\mu \neq 0$, the non-vanishing of the coefficient of $\sigma_1$ shows that the relation is nontrivial.

  Finally, we verify that the exceptional cosyzygy in \ref{funny} for $k \equiv 1 \pmod 3$ and $d = 2k$ imposes a nontrivial relation. The argument is almost the same. In this case, the cosyzygy gives
  \begin{equation}\label{eqn:exc_funny}
  x_{\lfloor 4k/3 \rfloor} \otimes x_0x_{\lceil 2k/3 \rceil}=x_0 \otimes x_{\lceil 2k/3 \rceil}x_{\lfloor 4k/3 \rfloor}.
  \end{equation}

  Reducing the left hand side of \eqref{eqn:exc_funny} modulo \ref{b_even}--\ref{kbhigh_odd}, we get
  \begin{align*}
    x_{\lfloor 4k/3 \rfloor} \otimes x_0x_{\lceil 2k/3 \rceil} &= x_{\lfloor 4k/3 \rfloor} \otimes m, \text{ where $m$ is balanced,}\\
    &= \alpha \sigma_1 + \beta\sigma_2 + \gamma \sigma_3.
  \end{align*}
  Since $\lfloor (d-2k)/3 \rfloor \leq \lfloor 4k/3 \rfloor \leq \lceil (d+2k)/3 \rceil$, Proposition~\ref{P:markov_chain} (Part 2(b)) implies that $\alpha > 0$.  

  Reducing the right hand side of \eqref{eqn:exc_funny}, we get
  \begin{align*}
    x_0 \otimes x_{\lceil 2k/3 \rceil}x_{\lfloor 4k/3 \rfloor} &= \lambda x_0 \otimes x_k^2 + \mu x_0 \otimes x_{\lfloor k/2 \rfloor} x_{\lceil 3k/2 \rceil}, \text{ where $\lambda, \mu > 0$} \\
    &= \lambda \sigma_3 + \mu x_{\lceil 3k/2\rceil} \otimes m,\text{ where $m$ is balanced,}\\
    &= \lambda \sigma_3 + \mu (\alpha' \sigma_1 + \beta'\sigma_2 + \gamma' \sigma_3).
  \end{align*}
  Since $\lceil 3k/2 \rceil > \lceil (d+2k)/3 \rceil$, Proposition~\ref{P:markov_chain} (Part 2(c)) implies that $\alpha' < 0$. Thus, \ref{funny} imposes
  \[ \alpha \sigma_1 + \beta\sigma_2 + \gamma \sigma_3 = \lambda \sigma_3 + \mu (\alpha' \sigma_1 + \beta'\sigma_2 + \gamma' \sigma_3).\]
  Since $\alpha > 0$ whereas $\mu\alpha' < 0$, the relation is nontrivial. 
\end{proof}

Before moving onto the key technical results needed in the proof of Proposition \ref{P:second-basis}, we introduce
some additional terminology. We call the forms $x_j^2$ and $x_jx_{j+1}$ \emph{balanced} and the forms $x_jx_{j+k}$ and $x_jx_{j+k+1}$ \emph{$k$-balanced}. Likewise, we 
call a tensor $x_i \otimes \mathfrak{m}$ \emph{balanced} (resp. \emph{$k$-balanced}) if $\mathfrak{m}$ is \emph{balanced} (resp. \emph{$k$-balanced}). Finally, we call a 
balanced tensor $x_i \otimes \mathfrak{m}$ of degree $d$
 \emph{well-balanced} if $\lfloor (d-2k)/3 \rfloor \leq i \leq \lceil (d+2k)/3 \rceil$. 
 Equivalently, a balanced tensor $x_i \otimes x_s x_\ell$ is well-balanced if $\max(|i-s|, |i-\ell |) \leq k+1$.
\begin{prop}

\label{P:markov_chain}
(Part 1) 
Every element of $\left(\HH^0(\omega) \otimes \HH^0(\omega^2)\right)/\Lambda'$ can be uniquely expressed as a linear combination of the 
following tensors:
  \begin{align*}
    &\text{Type 1}\left \{
    \begin{array}{l l}
    x_i \otimes x^2_i, \quad\text{ where } 0 \leq i \leq 2k\\
    x_i \otimes x_i x_{i+1}, \quad\text{ where } 0 \leq i \leq 2k-1\\
    x_i \otimes x_{i-1}x_i, \quad\text{ where } 1 \leq i \leq 2k\\
  \end{array}\right.\\  
&\text{Type 2}\left \{
  \begin{array}{l l}
  x_{i+k} \otimes x_i^2, \quad\text{ where } 0 \leq i \leq k\\
    x_{i+k+1} \otimes x_i^2, \quad\text{ where } 0 \leq i \leq k-1\\
    x_{i+k+1} \otimes x_ix_{i+1}, \quad\text{ where } 0 \leq i \leq k-1\\
  \end{array}\right.\\  
&\text{Type 3}\left \{
 \begin{array}{l l}
    x_{i-k} \otimes x_i^2, \quad\text{ where } k \leq i \leq 2k\\
    x_{i-k-1} \otimes x_i^2, \quad\text{ where } k+1 \leq i \leq 2k\\
    x_{i-k-1} \otimes x_ix_{i-1}, \quad\text{ where } k+1 \leq i \leq 2k\\
  \end{array}\right.
\end{align*}

(Part 2) Furthermore, let $2k \leq d \leq 4k$. Then there is precisely one tensor of degree $d$ of each Type 1--3. Suppose the balanced tensor
$\tau = x_i \otimes x_{\lfloor (d-i)/2\rfloor} x_{\lceil (d-i)/2
  \rceil}$ is expressed as
\[ \tau = \alpha \sigma_1 + \beta \sigma_2 + \gamma \sigma_3,\]
where $\sigma_t$ is of Type $t$. Then,
\begin{enumerate}
\item[(a)] $\alpha + \beta + \gamma = 1$;
\item[(b)] if $\lfloor (d-2k)/3 \rfloor < i < \lceil (d+2k)/3 \rceil$, then $\alpha > 0$, $\beta \geq 0$, $\gamma \geq 0$ (well-balanced case);
\item[(c)] if $i > \lceil (d+2k)/3 \rceil$, then $\alpha < 0$, $\beta > 1$, $\gamma \leq 0$;
\item[(d)] if $i < \lfloor (d-2k)/3 \rfloor$, then $\alpha < 0$, $\beta \leq 0$, $\gamma > 1$.
\end{enumerate}
\end{prop}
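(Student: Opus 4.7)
The plan is to prove Part 1 by a Markov-chain style reduction on the basis tensors $x_i \otimes \mathfrak m$, $\mathfrak m \in \mathcal B^-$, driven by the cosyzygies \ref{b_even}--\ref{kbhigh_odd} viewed as rewriting rules, and then to read off Part 2 by tracking coefficients through this reduction using Lemma \ref{L:quadratic}. Each cosyzygy $(x_a \wedge x_b) \otimes x_c$ yields the rewriting
\[ x_a \otimes x_b x_c \equiv x_b \otimes x_a x_c \pmod{\Lambda'}, \]
after which the new quadratic $x_a x_c$ must in general be re-expanded in $\mathcal B^-$ via Lemma \ref{L:quadratic} as a balanced plus (at most one) $k$-balanced term. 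The sinks of this rewriting should be exactly the tensors of Types 1--3: Type 1 tensors are fixed (the swap is the identity or the cosyzygy is zero), while Types 2 and 3 correspond precisely to the indices excluded in \ref{b_even}--\ref{bhigh_odd}, namely $a \in \{j\pm k,\, j\pm(k+1)\}$, where $x_a x_j$ is already $k$-balanced and no progress is possible.

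To prove termination and uniqueness for Part 1, I would assign to each balanced basis tensor $x_i \otimes x_s x_\ell$ the potential $\min(|i-s|, |i-\ell|)$ and to each $k$-balanced tensor an offset larger than the maximum balanced distance, and check that each non-terminal rewriting strictly decreases this potential. The rules \ref{b_even}--\ref{bhigh_odd} roughly halve the distance on balanced tensors (the new first factor is $x_s$ or $x_\ell$, and the balanced second factor recenters at $(i+s)/2$ or $(i+\ell)/2$), while \ref{kblow_even}--\ref{kbhigh_odd} convert $k$-balanced to balanced (using Lemma \ref{L:quadratic}(2) to kill the $k$-balanced correction when the indices of the swapped quadratic lie on the same side of $k$). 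Linear independence of Types 1--3 in the quotient then follows by a degree-by-degree count: in each degree $d$, the number of Type 1--3 tensors matches $\dim(\HH^0(\omega)\otimes \HH^0(\omega^2))_d - \dim \Lambda'_d$, giving the one/two/three generators claimed in the relevant degree ranges.

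For Part 2, I would reduce a balanced $\tau = x_i \otimes x_s x_\ell$ of degree $d$ one step at a time. A single step, via \ref{b_even}--\ref{bhigh_odd} and Lemma \ref{L:quadratic}, writes
\[ \tau \equiv \lambda\, \tau_{\mathrm{bal}} + \mu\, \tau_{k\text{-bal}} \pmod{\Lambda'}, \]
where $\tau_{\mathrm{bal}}$ is balanced with first factor closer to the center of its own quadratic, and $\tau_{k\text{-bal}}$ has $k$-balanced second factor and reduces via \ref{kblow_even}--\ref{kbhigh_odd} to $\sigma_2$ or $\sigma_3$. Part (a) then follows by induction from $\lambda + \mu = 1$ in Lemma \ref{L:quadratic}(1). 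In the well-balanced case (b), the reduction path stays inside the well-balanced region, so Lemma \ref{L:quadratic}(3)--(4) keeps $\lambda,\mu \geq 0$ throughout, and the terminal Type~1 contribution guarantees $\alpha > 0$.

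The main obstacle will be the sign analysis in cases (c) and (d). There Lemma \ref{L:quadratic}(5) introduces $\lambda < 0$, $\mu > 0$ at the first step, and the strict inequalities $\beta > 1$ and $\gamma > 1$ require showing that these signs cascade through the subsequent well-balanced reductions without cancellation. I would handle this by induction on $|i - \{d/3\}|$, setting up an explicit recursion for $(\alpha_n, \beta_n, \gamma_n)$ along the reduction path and verifying that each step preserves, and in cases (c)/(d) strictly strengthens, the claimed sign pattern; the key positivity is that the $\tau_{\mathrm{bal}}$ branch in the unbalanced first step is itself well-balanced, so its coefficients from the inductive hypothesis feed into $\beta$ or $\gamma$ with the correct sign to exceed $1$ after combining with the negative $\lambda$.
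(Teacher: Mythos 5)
Your outline for Part 1 agrees in spirit with the paper's proof (rewrite via \ref{b_even}--\ref{kbhigh_odd} together with Lemma~\ref{L:quadratic}, with Types 2 and 3 as the $k$-balanced sinks), and your Part~2 plan mirrors the paper's sign-cascading induction. But your termination argument for Part~1 has a genuine gap: the claim that each non-terminal rewriting strictly decreases the potential $\min(|i-s|,|i-\ell|)$ is false. A single step produces a sum $\lambda\tau_1+\mu\tau_2$, and while $\tau_1$ does move toward the center, the other term $\tau_2$ (obtained after reducing the $k$-balanced intermediate via \ref{kblow_even}--\ref{kbhigh_odd}) can land much farther away. For instance, with $k=10$ and $d=30$, the tensor $\tau=x_{11}\otimes x_9 x_{10}$ has potential $1$; applying \ref{blow_odd} and Lemma~\ref{L:quadratic} gives $\tau \equiv \lambda\, x_{10}\otimes x_{10}^2 + \mu\, x_{10}\otimes x_5 x_{15}$ with $\lambda,\mu>0$, and reducing $x_{10}\otimes x_5x_{15}$ via \ref{kblow_even} yields $\tau_2 = x_5\otimes x_{12}x_{13}$, a \emph{non-absorbing} well-balanced tensor of potential $7$. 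So no centered potential of the form you propose can certify termination, and neither can any potential measuring distance to Type~1 alone, since the $\mu$-branch generically jumps across the well-balanced interval toward Types~2 and~3.

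The paper avoids this by proving a strictly weaker statement (Lemma~\ref{L:markov}): \emph{only} the $\tau_1$ branch, which carries positive weight $\lambda>0$ whenever $\tau$ is not already of Type 1, 2, or 3, strictly decreases $|\{d/3\}-i|$. That is precisely what is needed to show the rewriting operator $P$ is an \emph{absorbing} Markov chain on the finite set of well-balanced tensors in a fixed degree: from every transient state there is a positive-weight path to a Type~1 sink. Standard Markov chain theory then gives $\lim_n P^n\tau$ exists and is supported on the absorbing states, without any monotone decrease. You should replace the potential argument by this (or an equivalent) non-monotone convergence argument; note also that your Part~2 recursion for $(\alpha_n,\beta_n,\gamma_n)$ implicitly presupposes a well-founded reduction, which it must inherit from the corrected Part~1.
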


\begin{remark}\label{rem:sinks_in_d}
  In terms of the $u$-degree, the list of tensors in
  Proposition~\ref{P:markov_chain} can be written more compactly as follows:
\begin{enumerate}
 \item[](Type 1) $x_{\{d/3\}} \otimes x_{\lfloor d/3 \rfloor} x_{\lceil d/3 \rceil}$ \quad where $0 \leq d \leq 6k$.
 \item[](Type 2) $x_{\lceil (d+2k)/3 \rceil} \otimes x_{\lfloor (d-k)/3 \rfloor} x_{\{(d-k)/3 \}}$ \ where $k \leq d \leq 4k$.
 \item[](Type 3) $x_{\lfloor (d-2k)/3 \rfloor} \otimes x_{\lceil (d+k)/3 \rceil} x_{\{(d+k)/3 \}}$ \ where $2k \leq d \leq 5k$.
\end{enumerate}
\end{remark}

\begin{proof}
Using the cosyzygies \ref{b_even}--\ref{kbhigh_odd}, we reduce every element of the basis $\B$ 
to a linear combination of the tensors of Type 1, 2, and 3. 
Uniqueness then follows by counting the dimensions. 
  
\textbf{Step 1} (Reducing $k$-balanced tensors to balanced tensors):
Consider a $k$-balanced tensor $x_i \otimes x_{\lfloor (d-i-k)/2 \rfloor} x_{\lceil(d-i+k)/2\rceil}$, where $k \leq d-i \leq 3k$. Suppose $i \geq k$. 
Then modulo the cosyzygy \ref{kblow_even} or \ref{kblow_odd}, we get
\[ x_i \otimes x_{\lfloor (d-i-k)/2 \rfloor} x_{\lceil(d-i+k)/2\rceil} = x_{\lfloor (d-i-k)/2 \rfloor} \otimes x_i x_{\lceil(d-i+k)/2\rceil}.\]
Since $i \geq k$ and $\lceil(d-i+k)/2\rceil \geq k$, the form $x_ix_{\lceil (d-i+k)/2 \rceil}$ equals a balanced form in $\HH^0(\omega^2)$
by Lemma \ref{L:quadratic}. The case of $i < k$ is analogous using cosyzygies \ref{kbhigh_even} or \ref{kbhigh_odd}.

\textbf{Step 2} (Reducing balanced tensors to well-balanced tensors):
Consider a balanced tensor $x_i \otimes x_{\lfloor (d-i)/2\rfloor}x_{\lceil (d-i)/2 \rceil}$ that is not well-balanced. For brevity, set
  \[ s = \lfloor(d-i)/2 \rfloor, \quad \ell = \lceil(d-i)/2\rceil.\]
  Assume that $i > \lceil (d+2k)/3 \rceil$ (the case of $i < \lfloor(d-2k)/3 \rfloor$ follows symmetrically). We then have 
  $i - s > k+1$ and hence $i > \ell + k > \ell$. Modulo the cosyzygy \ref{b_even} or \ref{blow_odd}, we get
  \[  x_i \otimes x_s x_\ell =  x_\ell \otimes x_s x_i.\]
  By Lemma \ref{L:quadratic}, we have
  \[ x_sx_i = \lambda m_1 + \mu m_1',\]
  where $m_1$ is balanced, $m_1'$ is $k$-balanced, and $\lambda + \mu = 1$. Since $i - s > k+1$, we also have $\lambda < 0$. 
  Reducing the $k$-balanced tensor $x_\ell \otimes m_1'$ as in Step 1, we get
  \begin{align*}
    x_\ell\otimes m_1' &= x_\ell \otimes x_{\lfloor (d - \ell -k)/2\rfloor} x_{\lceil (d - \ell +k)/2 \rceil} \\
    &= x_{\lceil (d-\ell+k)/2 \rceil} \otimes m_2 \quad \text{ modulo \ref{kbhigh_even} or \ref{kbhigh_odd}}
  \end{align*}
  where $m_2$ is balanced. We thus get an expression
  \begin{equation}\label{eqn:deviation_reduction}
      x_i \otimes x_sx_\ell = \lambda x_\ell \otimes m_1 + \mu x_{\lceil (d-\ell+k)/2 \rceil} \otimes m_2, 
     \end{equation}
\[
\text{where $m_1$ and $m_2$ are balanced, } s = \lfloor (d-i)/2 \rfloor,\ \ell = \lceil(d-i)/2\rceil,\ \lambda + \mu = 1,\ \lambda < 0.
\]

  Note that we have the inequalities
  \begin{align*}
    &\lfloor (d-2k)/3 \rfloor \leq \ell \leq \lceil (d+2k)/3 \rceil, \text { and } \\
    &\lceil (d+2k)/3 \rceil \leq \lceil (d-\ell+k)/2\rceil < i.
  \end{align*}
  In other words, the first tensor on the right in \eqref{eqn:deviation_reduction} is already well-balanced and the second is strictly closer to being well-balanced than the original tensor. By repeated application of \eqref{eqn:deviation_reduction}, we arrive at a linear combination of well-balanced tensors.
  
  \textbf{Step 3} (Reducing the well-balanced tensors): We now show that all well-balanced tensors reduce to linear combinations of tensors of Type 1, 2, and 3. We will make use of the following result.
  \begin{lemma}\label{L:markov}
    Let $\tau=x_i\otimes x_{\lfloor (d-i)/2 \rfloor}x_{\lceil (d-i)/2 \rceil}$ be a well-balanced tensor of degree $d$. Modulo
    \emph{\ref{b_even}--\ref{kbhigh_odd}}, we have a reduction
    \begin{equation}\label{eqn:markov_reduction}
      \tau = \lambda \tau_1 + \mu \tau_2,
    \end{equation}
    where $\tau_1$ and $\tau_2$ are well-balanced, $\lambda + \mu = 1$, and $\lambda,\ \mu \geq 0$. Moreover, if $\tau$ is not of Type 2 or 3, then $\lambda >0$. And, if $\tau$ is not of Type 1, 2, or 3, then $\tau_1=x_j \otimes x_{\lfloor (d-j)/2 \rfloor}x_{\lceil (d-j)/2 \rceil}$,
    where $\vert \{d/3\} - j \vert < \vert \{d/3\} - i \vert$, and $\tau_1$ is not of Type 2 or 3.
  \end{lemma}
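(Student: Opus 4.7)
The plan is a swap-and-simplify argument that mirrors Step 2 of the proof of Proposition~\ref{P:markov_chain}, but run on well-balanced tensors so that the coefficients that appear are nonnegative. By the $\ZZ_2$-symmetry of $R$, I may assume $i \leq \{d/3\}$, so that $i \leq s \leq \ell$, where $s = \lfloor (d-i)/2 \rfloor$ and $\ell = \lceil (d-i)/2 \rceil$. When $\tau$ is already Type 1 ($i = \{d/3\}$) the reduction is trivial with $\lambda = 1$, $\mu = 0$, and $\tau_1 = \tau$; when $\tau$ is Type 3 ($i = \lfloor (d-2k)/3 \rfloor$) it is similarly trivial with $\lambda = 0$, $\mu = 1$, and $\tau_2 = \tau$. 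So I focus on the interior case $\lfloor (d-2k)/3 \rfloor < i < \{d/3\}$.

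The first move is to swap $x_i$ with $x_s$, using \ref{b_even} when $s = \ell$ and \ref{bhigh_odd} when $\ell = s+1$. The excluded-index patterns in these two cosyzygy types correspond precisely to the Type~3 configurations (i.e.\ $i = s-k$ or $s-k-1$ or $s-k+1$), which I have ruled out; the one lingering sub-case $i = s-k+1$ with $\ell = s+1$ is instead handled by swapping $x_i$ with $x_\ell$ via \ref{blow_odd}. This produces $\tau = x_s \otimes x_i x_\ell$ (or the analogous identity with the roles of $s$ and $\ell$ swapped). Next I apply Lemma~\ref{L:quadratic} to write $x_i x_\ell = \lambda m_1 + \mu m_1'$, where $m_1$ is balanced, $m_1'$ is $k$-balanced, and $\lambda + \mu = 1$. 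The crucial observation is that for $i > \lfloor (d-2k)/3 \rfloor$ one has $\ell - i = \lceil (d-3i)/2 \rceil \leq k$, so parts (2)--(4) of Lemma~\ref{L:quadratic} force $\lambda, \mu \geq 0$. Finally, the $k$-balanced tensor $x_s \otimes m_1'$ is reduced via \ref{kblow_even}--\ref{kbhigh_odd} (exactly as in Step~1 of the proof of Proposition~\ref{P:markov_chain}) to a well-balanced tensor $\tau_2$, while the balanced summand supplies a well-balanced candidate $\sigma_1 := x_s \otimes m_1$.

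It remains to verify the strict-distance claim for $\tau_1$. A direct estimate shows that $s = \lfloor (d-i)/2 \rfloor$ sits between $\{d/3\}$ and $d-2i$, so $|\{d/3\} - s| \leq |\{d/3\} - i|$, with equality only in the overshoot scenario where $s = \ell$ and $i + s = 2\{d/3\}$. Outside of that scenario I take $\tau_1 = \sigma_1$ and the strict inequality holds. In the overshoot case, $\sigma_1$ still lies strictly inside the well-balanced range and is not of Type~1, 2, or~3, so a second application of the same swap step rewrites $\sigma_1$ modulo $\Lambda'$ as Type~1, which has distance $0 < |\{d/3\}-i|$; composing, one gets $\tau \equiv 1\cdot(\text{Type 1}) + 0\cdot \tau_2$ modulo $\Lambda'$, satisfying the strict inequality.

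The main obstacle will be the combinatorial bookkeeping around the boundary: matching the forbidden-index sets in \ref{b_even}, \ref{blow_odd}, and \ref{bhigh_odd} exactly with the Type~3 configurations and the one exceptional swap; confirming that $\tau_2$, produced from the $k$-balanced part via \ref{kblow_even}--\ref{kbhigh_odd}, is genuinely well-balanced (not just balanced); and handling the narrow overshoot regime where a single swap only preserves the distance, requiring a second pass to achieve the strict decrease asserted in the conclusion.
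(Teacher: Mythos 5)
Your proof follows essentially the same swap-and-simplify strategy as the paper, merely symmetrized via $\ZZ_2$ to the regime $i \leq \{d/3\}$, $i \leq s \leq \ell$ (the paper instead treats $i > \ell$ and appeals to symmetry). The identification of the forbidden-index patterns of \ref{b_even} and \ref{bhigh_odd} with the Type~3 configurations, the special sub-case $i = \ell - k$ handled via \ref{blow_odd} (this is the $\ZZ_2$-mirror of the paper's special case $i = s+k$), and the estimate $\ell - i \leq k$ giving $\lambda, \mu \geq 0$ via Lemma~\ref{L:quadratic} are all correct and match the paper's structure.

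The genuinely interesting part of your proposal is that you isolate the ``overshoot'' case, and in fact this exposes a small inaccuracy in the paper's own proof. Concretely, for $d \equiv 1 \pmod 3$, $m = \{d/3\}$, and $i = m-1$ (equivalently $d \equiv 2$, $i = m+1$ in the paper's orientation), one has $s = \ell = m+1$, and a single swap produces $\tau_1 = x_{m+1}\otimes x_m^2$ with $|\{d/3\}-(m+1)| = |\{d/3\}-(m-1)| = 1$: the claimed strict inequality fails. (For example, with $g=7$, $d=8$ the tensor $x_4 \otimes x_2^2$ is well-balanced, not of Type 1, 2, or 3, yet one swap lands on level $2$, the same distance from $\{8/3\}=3$.) Your diagnosis and repair idea are both right: one further application of \ref{b_even} to $x_{m+1}\otimes x_m^2$ gives $x_m \otimes x_m x_{m+1}$, and since $x_mx_{m+1}$ is already the distinguished balanced quadric, the second step has no $k$-balanced remainder, so $\sigma_1$ reduces exactly to the Type~1 tensor with coefficient $1$. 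Taking $\tau_1$ to be that Type~1 tensor then restores the strict inequality.

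However, your last sentence has a coefficient error that you should fix: the composed reduction is $\tau \equiv \lambda(\text{Type 1}) + \mu \tau_2$ with the same $\lambda$ and $\mu$ from the first swap (both strictly positive in the overshoot case), not $\tau \equiv 1\cdot(\text{Type 1}) + 0\cdot\tau_2$. The lemma's conclusion still holds in the required two-term form, but with the coefficients $\lambda, \mu$ inherited from the first step, not collapsed to $1, 0$.
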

  \begin{proof}[Proof of the lemma]
    Let $\tau = x_i \otimes x_{\lfloor (d-i)/2 \rfloor}x_{\lceil (d-i)/2 \rceil}$. For brevity, set $s= \lfloor (d-i)/2 \rfloor$ and $\ell = \lceil (d-i)/2 \rceil$. 
    If $i = \ell$ or $i = s$, then $\tau$ is of Type 1. In this case, we take $\tau_1 = \tau$ and $\lambda = 1,\ \mu = 0$. If both $x_i x_{\ell}$ and $x_i x_{s}$ are $k$-balanced, 
    then $\tau$ is of Type 2 or 3. In this case, we take $\tau_2 = \tau$ and $\lambda = 0, \mu = 1$. Suppose neither of these is the case. 
    We consider the case of $i > \ell$; the case of $i < s$ follows symmetrically. Note that $\ell$ satisfies
    \begin{equation}\label{eqn:j_close}
      \{ (d-k)/3 \} \leq \ell \leq \{ (d+k)/3 \}.
    \end{equation}

    We first treat the special case $i = s + k$. Since not both $x_i x_\ell$ and $x_ix_s$ are $k$-balanced, we must have $s = \ell-1$. 
    Therefore, we get
    \begin{align*}
      \tau &=  x_i \otimes x_{\ell-1} x_\ell \\
      &= x_{\ell-1} \otimes x_i x_\ell \quad \text{ modulo \ref{bhigh_odd}}\\
      &= \lambda x_{\ell-1} \otimes m_1 + \mu x_{\ell-1} \otimes x_{\ell+k} x_{\ell-1}, \\
      &\qquad \text{ where $m_1$ is balanced, $\lambda > 0$, $\mu \geq 0$, and $\lambda + \mu = 1$ (Lemma \ref{L:quadratic}),}\\
      &= \lambda x_{\ell-1} \otimes m_1 + \mu x_{\ell+k} \otimes x_{\ell-1}x_{\ell-1} \quad \text{ modulo \ref{kbhigh_even}}\\
      &= \lambda \tau_1 + \mu \tau_2, \quad \text{as desired.}
    \end{align*}

    Now assume that $i \neq s+k$. Then $0 < i-\ell \leq i-s < k$. In this case, we get
    \begin{align*}
      \tau 
      &=  x_i \otimes x_s x_\ell \\
      &= x_\ell \otimes x_s x_i \quad \text{ modulo \ref{b_even} or \ref{blow_odd}}.
    \end{align*}
    We now write using Lemma \ref{L:quadratic}
    \[ x_s x_i = \lambda m_1 + \mu m_1',\]
    where $m_1$ is balanced, $m_1'$ is $k$-balanced and $\lambda + \mu = 1$. Since $0 < i- s < k$, we have $\lambda > 0$ and $\mu \geq 0$. Reducing the $k$-balanced tensor $x_\ell \otimes m_1'$ as in Step 1, we get
    \[ x_\ell \otimes m_1' = x_p \otimes m_2,\]
    where $m_2$ is balanced and 
    \[ p = 
    \begin{cases}
      \lfloor (d-\ell-k)/2 \rfloor & \text{if $\ell \geq k$,}\\
      \lceil (d-\ell+k)/2 \rceil & \text{if $\ell < k$.}
    \end{cases}
    \]
    In either case, \eqref{eqn:j_close} implies that
    \[ \lfloor (d-2k)/3 \rfloor \leq p \leq \lceil (d+2k)/3 \rceil.\]
    Setting $\tau_1 = x_\ell \otimes m_1$ and $\tau_2 = x_p \otimes m_2$, we thus get
    \[ \tau = \lambda \tau_1 + \mu \tau_2,\]
    as claimed.
    
    Finally, we note that if $\tau=x_i \otimes x_{s}x_{\ell}$ was not of Type 1, 2, or 3, then by construction
    $\tau_1$ has level $j$ where either $j=s$
    in the case of $i = s + k$, or $j=\ell$ in all other cases. In either case, it is clear
    that $\vert \{d/3\} - j \vert < \vert \{d/3\} - i \vert$. (Informally, this means that $\tau_1$ is closer to being Type 1 than 
    $\tau$.) This finishes the proof of the lemma.
  \end{proof}
  
  \begin{figure}
    \begin{tikzpicture}[node distance=5em]
      \node[draw,rectangle] (0) {$\tau$};
      \node[draw,rectangle, below left of=0] (1) {$\tau_1$};
      \node[draw,rectangle, below right of=0] (2) {$\tau_2$};
      \draw (0) edge [->, bend right=20] node [left] {$\lambda$} (1);
      \draw (0) edge [->, bend left=20] node [right] {$\mu$} (2);
    \end{tikzpicture}
    \caption{The relations among well-balanced tensors as a Markov chain}
    \label{fig:markov}
  \end{figure}
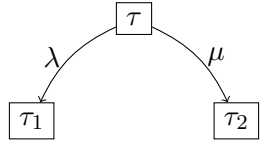
  
  We continue the proof of Proposition~\ref{P:markov_chain}. Let $\Omega$ be the set of well-balanced tensors. 
  Define a linear operator $P \colon \CC\langle\Omega\rangle \to \CC\langle\Omega\rangle$ that encodes \eqref{eqn:markov_reduction}, namely
  \[ P \colon \tau \mapsto \lambda \tau_1 + \mu \tau_2.\]
  By Lemma \ref{L:markov}, we can interpret $P$ as a Markov process on $\Omega$ (see Figure~\ref{fig:markov}). 
  Notice that the absorbing states of this Markov chain are precisely the tensors of 
  Type 1, 2, and 3. 
  Furthermore, from every other tensor, the path $\tau \to \tau_1 \to \dots$ eventually leads to a tensor of Type 1, again 
  by Lemma \ref{L:markov}. As a result, $P$ is an absorbing Markov 
  chain. By basic theory of Markov chains, 
  for every $v \in \CC\langle\Omega\rangle$, the limit $\lim_{n \to \infty} P^nv$ exists and is supported on the absorbing states. 
  Taking $v = 1 \cdot \tau$, we conclude 
  that $\tau$ reduces to a linear combination of the absorbing states. We thus get a linear relation
  \[ \tau = \alpha \sigma_1 + \beta \sigma_2 + \gamma \sigma_3,\]
  where $\sigma_t$ is of Type $t$, as claimed.
  
  The above analysis also lets us deduce the claims about the coefficients from Part 2 of the proposition. 
  Let $2k \leq d \leq 4k$. Say $\tau = x_i \otimes x_s x_\ell$ reduces as
  \[ \tau = \alpha \sigma_1 + \beta \sigma_2 + \gamma \sigma_3,\]
  where $\sigma_t$ is of Type $t$. 

  For Part 2(a), we note that $\alpha+\beta+\gamma = 1$ follows by passing to $\HH^0(\omega^3)$ and comparing the coefficients
  of $u^d$.

  For Part 2(b), assume that $\lfloor (d-2k)/3 \rfloor < i < \lceil (d+2k)/3 \rceil$. Then $\tau$ is well-balanced. The non-negativity of $P$ implies the non-negativity of 
  $\alpha$, $\beta$, and $\gamma$. Furthermore, since there is a path of positive weight from $\tau$ to $\sigma_1$, we have $\alpha > 0$.

  For Part 2(c), note that if $i = \lceil (d+2k)/3 \rceil$, then $\alpha = 0$, $\beta = 1$, and $\gamma = 0$. For $i > \lceil (d+2k)/3\rceil$, we show by descending induction on $i$ that $\alpha < 0$ and $\gamma \leq 0$. Then since $\alpha+\beta+\gamma = 1$, it follows that $\beta > 1$. 
  For the induction, recall the reduction \eqref{eqn:deviation_reduction}:
  \begin{equation*}\label{eqn:deviation_reduction_again}
  \tau = \lambda x_\ell \otimes m_1 + \mu x_{\lceil (d-\ell+k)/2 \rceil} \otimes m_2,
\end{equation*}
where the $m_i$ are balanced, $\lambda < 0$, $\mu > 0$, and $\lambda + \mu = 1$. Recall also the inequalities
  \begin{align*}
    \lfloor (d-2k)/3 \rfloor &\leq \ell \leq \lceil (d+2k)/3 \rceil \text{ and }\\
    \lceil (d+2k)/3 \rceil &\leq \lceil (d-\ell+k)/2 \rceil < i.
  \end{align*}
  Except in the extreme case $(d,i) = (2k,2k)$, both inequalities in the first line are strict. Say we have the reductions
  \begin{align*}
    x_\ell \otimes m_1 &= \alpha' \sigma_1 + \beta' \sigma_2 + \gamma' \sigma_3 \text{, and }\\
    x_{\lceil (d-\ell+k)/2 \rceil} \otimes m_2 &=\alpha'' \sigma_1 + \beta'' \sigma_2 + \gamma'' \sigma_3.
  \end{align*}
By Part 2(b), we have $\alpha' > 0$, and $\gamma' \geq 0$. By the inductive assumption, we have $\alpha'' \leq 0$, and $\gamma'' \leq 0$. 
Since $\lambda < 0$ and $\mu > 0$ in \eqref{eqn:deviation_reduction_again}, we conclude the induction step. In the extreme case $(d,i) = (2k,2k)$, 
the reduction \eqref{eqn:deviation_reduction_again} becomes
  \[ \tau = \lambda \sigma_3 + \mu x_{\lceil 3k/2 \rceil} \otimes m_2.\]
  The assertion now follows from that for $x_{\lceil 3k/2 \rceil} \otimes m_2$.

  Finally, Part 2(d) follows symmetrically from Part 2(c).

\end{proof}

\subsection{A construction of the third (and final!) monomial basis}
\label{third-basis}
Let $\mathcal C^\star$ be the union of the following sets of cosyzygies:
\begin{enumerate}[label=\textbf{(S\arabic*)}] 
\item\label{old} The cosyzygies \ref{b_even}--\ref{funny_5k} in the description of $\mathcal{C}^{-}$.
\item \label{fudge_low} $(x_{d-k} \wedge x_0) \otimes x_k$ for $2k \leq d < 3k$,
\item \label{fudge_middle} $(x_{2k} \wedge x_0) \otimes x_k$
\item \label{fudge_high} $(x_{d-3k} \wedge x_{2k}) \otimes x_k$ for $3k < d \leq 4k$.
\end{enumerate}

\begin{prop}\label{P:third-basis}
  $\mathcal{C}^\star$ is a monomial basis of cosyzygies with $T$-state
  \[
  w_T(\mathcal C^\star) =
  \frac{15g-29}{2}\left(x_0+x_{2k}\right) + (8g-16) x_k + (9g-20) \sum_{i \neq 0,k,2k} x_i
  \]
\end{prop}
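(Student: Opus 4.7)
The plan is to follow the strategy of Proposition \ref{P:second-basis} essentially verbatim, adapting only the handling of the degrees $d \in [2k, 4k]$ where the \ref{funny} family has been replaced. Since \ref{fudge_low}, \ref{fudge_middle}, and \ref{fudge_high} together contain $k + 1 + k = 2k+1$ cosyzygies, matching the number of \ref{funny} cosyzygies they replace, $|\C^\star| = |\C^-| = (3g-5)(g-1)$. The $T$-state is a direct tally: the combined contribution of the new cosyzygies works out to $(k+1)(x_0+x_{2k}) + (2k+3)x_k + \sum_{i \neq 0, k, 2k} x_i$, and adding this to $w_T(\C^-)$ and subtracting the contribution $3\sum_{i=0}^{2k} x_i$ of \ref{funny} yields the stated $w_T(\C^\star)$.

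Because $\C^\star$ contains all of \ref{b_even}--\ref{kbhigh_odd} via \ref{old}, Proposition \ref{P:markov_chain} applies verbatim: modulo $\Lambda'$, each weight space $\left(\HH^0(\omega) \otimes \HH^0(\omega^2)\right)_d$ is generated by at most one, two, or three tensors according to $d$. The cosyzygies \ref{funny_k} and \ref{funny_5k} in \ref{old} reduce the two generators at $d = k$ and $d = 5k$ to one, exactly as in the proof of Proposition \ref{P:second-basis}. The remaining task is to establish that, for each $d \in [2k, 4k]$, the corresponding \ref{fudge_*} cosyzygy imposes a nontrivial relation on the three generators $\sigma_1, \sigma_2, \sigma_3$ (of Type 1, 2, and 3, respectively) from Proposition \ref{P:markov_chain}.

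For $d \in [2k, 3k-1]$, the \ref{fudge_low} cosyzygy yields
\[ x_0 \otimes x_k x_{d-k} = x_{d-k} \otimes x_0 x_k. \]
I would expand $x_k x_{d-k}$ using Lemma \ref{L:quadratic} into a linear combination of a balanced and a $k$-balanced form (both coefficients positive for $d > 2k$, degenerating to the trivial case $x_k^2$ at $d = 2k$), and likewise treat $x_0 x_k$. Each side is then processed through the three-step reduction of the proof of Proposition \ref{P:markov_chain} to produce an expression $\alpha\sigma_1 + \beta\sigma_2 + \gamma\sigma_3$; comparison of coefficients on the two sides gives a relation whose nontriviality follows from the sign constraints of Proposition \ref{P:markov_chain}, Part 2(b)--(d). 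The case $d = 3k$ (\ref{fudge_middle}) is handled by a direct analogous computation, and the range $3k < d \leq 4k$ (\ref{fudge_high}) follows from \ref{fudge_low} by the $\ZZ_2$-symmetry of Remark \ref{R:symmetry}, which sends $(x_{d-3k} \wedge x_{2k}) \otimes x_k$ to an instance of \ref{fudge_low} at degree $6k - d$.

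The principal obstacle, as in the proof of Proposition \ref{P:second-basis}, is the nontriviality check in the final step. The reduction compounds coefficients from Lemma \ref{L:quadratic} and from the deviation formula \eqref{eqn:deviation_reduction}, and the strict sign inequalities of Proposition \ref{P:markov_chain} Part 2 must be carefully propagated through both well-balanced and non-well-balanced intermediate tensors (note that the level $d-k$ of the right-hand tensor exceeds the well-balanced upper bound $\lceil (d+2k)/3 \rceil$ once $d$ is large enough, forcing iterated use of the deviation reduction). The boundary cases $d = 2k$ and $d = 4k$, where $x_{d-k} = x_k$ or $x_{d-3k} = x_k$, are likely to require direct verification rather than fitting into a uniform bookkeeping argument.
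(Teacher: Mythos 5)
Your proposal follows the paper's architecture: the same count and $T$-state tally (computing the state as a correction to $w_T(\C^-)$ is fine and checks out), the same reduction via Proposition~\ref{P:markov_chain} to showing that each of \ref{fudge_low}--\ref{fudge_high} imposes a nontrivial relation among the three generators $\sigma_1,\sigma_2,\sigma_3$ in degrees $2k\le d\le 4k$, and the $\ZZ_2$-symmetry for $3k<d\le 4k$. However, the decisive nontriviality check is precisely what you leave as a plan, and the plan as sketched has a concrete flaw. You propose to expand $x_kx_{d-k}$ by Lemma~\ref{L:quadratic} into a balanced plus a $k$-balanced form ``both coefficients positive for $d>2k$''; this is false, since both indices $k$ and $d-k$ are $\ge k$, so the $k$-balanced coefficient vanishes and $x_kx_{d-k}$ \emph{equals} the balanced form $x_{\lfloor d/2\rfloor}x_{\lceil d/2\rceil}$ in $\HH^0(\omega^2)$. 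Likewise $x_0x_k$ should not be processed as a $k$-balanced form: for a tensor of level $d-k\ge k$ the Step~1 reduction in the proof of Proposition~\ref{P:markov_chain} is unavailable because \ref{kblow_even} excludes $j=0$ --- the cosyzygy you would need there is $(x_{d-k}\wedge x_0)\otimes x_k$, i.e.\ \ref{fudge_low} itself, so that route is circular. The correct observation is that $\HH^0(\omega^2)_k$ is one-dimensional, so $x_0x_k=x_{\lfloor k/2\rfloor}x_{\lceil k/2\rceil}$ outright.

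Once these two points are in place, the ``principal obstacle'' you describe disappears, and this is exactly how the paper concludes: both sides of the \ref{fudge_low} relation are already \emph{balanced} tensors, of level $0$ and level $d-k$ respectively, so Part~2 of Proposition~\ref{P:markov_chain} applies to each side directly, with no compounding of coefficients, no iterated deviation reductions, and no separate analysis of the boundary degrees $d=2k,3k,4k$. Concretely, since $0\le\lfloor(d-2k)/3\rfloor$, the left side reduces with $\gamma>0$, $\alpha\le 0$, $\beta\le 0$, while since $d-k>\lfloor(d-2k)/3\rfloor$ the right side reduces with $\alpha'>0$ (if $d-k<\lceil(d+2k)/3\rceil$) or $\beta'>0$ (otherwise); comparing the $\sigma_1$- or $\sigma_2$-coefficients shows the relation is nontrivial, the case $d=3k$ is identical, and $3k<d\le 4k$ follows by symmetry. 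So while you cite the right sign constraints from Part~2(b)--(d), the computation as you routed it would pass through reductions that are either blocked or carry the wrong coefficients, and the key step is not yet proved in your write-up.
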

\begin{proof}
  Let $\Lambda'$ be the span in $\HH^0(\omega) \otimes \HH^0(\omega^2)$ of the cosyzygies in \ref{old}. Then by Proposition \ref{P:markov_chain}, 
  the quotient 
  $\left(\HH^0(\omega) \otimes \HH^0(\omega^2)\right)/\Lambda'$ is generated by one element in degrees $0 \leq d \leq k$ and $5k \leq d \leq 6k$, by two elements in degrees 
  $k < d < 2k$ and $4k < d < 5k$, and by three elements in degrees $2k \leq d \leq 4k$. It suffices to prove that the cosyzygies \ref{fudge_low}--\ref{fudge_high} impose a 
  nontrivial linear relation among the three generators in degrees $2k \leq d \leq 4k$.

Let $2k \leq d < 3k$. Recall that the three generators in this
degree are
\begin{align*}
\sigma_1 &:= x_{\{d/3\}} \otimes x_{\lceil d/3 \rceil}x_{\lfloor d/3 \rfloor},\\
    \sigma_2 &:= x_{\lceil (d+2k)/3 \rceil} \otimes x_{\lfloor (d-k)/3 \rfloor} x_{\{ (d-k)/3\}}, \text{ and }\\
    \sigma_3 &:= x_{\lfloor (d-2k)/3 \rfloor} \otimes x_{\{ (d+k)/3 \}} x_{\lceil (d+k)/3\rceil}.\\
  \end{align*}
  The relation given by \ref{fudge_low} is 
  \[ x_0 \otimes x_{d-k}x_{k} = x_{d-k} \otimes x_0 x_k.\]
  We reduce both sides modulo $\Lambda'$. Note that $x_0 \otimes x_{d-k}x_k = x_0 \otimes m_1$ and $x_{d-k} \otimes x_0x_k = x_{d-k} \otimes m_2$ where the $m_i$ are balanced. Modulo $\Lambda'$, we have by Proposition \ref{P:markov_chain}
  \begin{align*}
    x_0 \otimes m_1 &= \alpha \sigma_1 + \beta \sigma_2 + \gamma \sigma_3 \ ,\quad \text{and}\\
    x_{d-k} \otimes m_2 &= \alpha' \sigma_1 + \beta' \sigma_2 + \gamma' \sigma_3.
  \end{align*}
  The relation imposed by \ref{fudge_low} is therefore
  \begin{equation}\label{eqn:fudge_relation_low}
 \alpha \sigma_1 + \beta \sigma_2 + \gamma \sigma_3 = \alpha' \sigma_1 + \beta' \sigma_2 + \gamma' \sigma_3.
\end{equation}
On one hand, since $0 \leq \lfloor (d-2k)/3 \rfloor$, Proposition~\ref{P:markov_chain} implies that $\gamma > 0$, $\alpha \leq 0$, and $\beta \leq 0$. On the other hand, since $\lfloor (d-2k)/3 \rfloor < d-k$, we either have $\alpha > 0$ (if $d-k < \lceil (d+2k)/3 \rceil$) or $\beta > 0$ (if $\lceil (d+2k)/3 \rceil \leq d-k$). In either case, the relation \eqref{eqn:fudge_relation_low} is nontrivial.

The same argument goes through for $d = 3k$.

The case of $3k < d \leq 4k$ follows symmetrically.
\end{proof}

\section{Computer calculations}  
For any given genus, the semi-stability of any syzygy point of the balanced ribbon 
can in principle be verified numerically by enumerating all the states and checking that their convex hull contains 
the trivial state.  We did calculations in \texttt{Macaulay2} and \texttt{polymake} \cite{Macaulay2, polymake} 
that established GIT semi-stability of the $1^{st}$ syzygy point of the balanced ribbon for $g=7,9,11,13$ and the $2^{nd}$ syzygy point for $g=9,11$. (Computations for higher genera appear to be infeasible.)
The main theorem of this paper (on first syzygies) and these calculations on second syzygies (for small genus) 
provide the first evidence for Keel's approach to constructing the canonical model of $\M_g$.

\bibliographystyle{alpha}
\bibliography{references20}

\begin{thebibliography}{AFS13}

\bibitem[AF11a]{aprodu-farkas-K3}
Marian Aprodu and Gavril Farkas.
\newblock Green's conjecture for curves on arbitrary {$K3$} surfaces.
\newblock {\em Compos. Math.}, 147(3):839--851, 2011.

\bibitem[AF11b]{aprodu-farkas-survey}
Marian Aprodu and Gavril Farkas.
\newblock Koszul cohomology and applications to moduli.
\newblock In {\em Grassmannians, moduli spaces and vector bundles}, volume~14
  of {\em Clay Math. Proc.}, pages 25--50. Amer. Math. Soc., Providence, RI,
  2011.

\bibitem[AFS13]{AFS-stability}
Jarod Alper, Maksym Fedorchuk, and David Smyth.
\newblock Finite {H}ilbert stability of (bi)canonical curves.
\newblock {\em Invent. Math.}, 191(3):671--718, 2013.

\bibitem[BE95]{BE}
Dave Bayer and David Eisenbud.
\newblock Ribbons and their canonical embeddings.
\newblock {\em Trans. Amer. Math. Soc.}, 347(3):719--756, 1995.

\bibitem[Ein87]{ein}
Lawrence Ein.
\newblock A remark on the syzygies of the generic canonical curves.
\newblock {\em J. Differential Geom.}, 26(2):361--365, 1987.

\bibitem[FJ12]{fedorchuk-jensen}
Maksym Fedorchuk and David Jensen.
\newblock Stability of 2nd {H}ilbert points of canonical curves.
\newblock {\em International Mathematics Research Notices}, 2012.
\newblock DOI:10.1093/imrn/rns204.

\bibitem[Fon93]{fong}
Lung-Ying Fong.
\newblock Rational ribbons and deformation of hyperelliptic curves.
\newblock {\em J. Algebraic Geom.}, 2(2):295--307, 1993.

\bibitem[Gie82]{gieseker}
D.~Gieseker.
\newblock {\em Lectures on moduli of curves}, volume~69 of {\em Tata Institute
  of Fundamental Research Lectures on Mathematics and Physics}.
\newblock Published for the Tata Institute of Fundamental Research, Bombay,
  1982.

\bibitem[GJ]{polymake}
Ewgenij Gawrilow and Michael Joswig.
\newblock \texttt{\upshape polymake}: a framework for analyzing convex
  polytopes.
\newblock Version 2.12, \url{http://www.math.tu-berlin.de/polymake/}.

\bibitem[Gre84]{green}
Mark~L. Green.
\newblock Koszul cohomology and the geometry of projective varieties.
\newblock {\em J. Differential Geom.}, 19(1):125--171, 1984.

\bibitem[GS]{Macaulay2}
Dan Grayson and Mike Stillman.
\newblock \texttt{\upshape macaulay2}: a software system for research in
  algebraic geometry.
\newblock Version 1.6, \url{http://www.math.uiuc.edu/Macaulay2/}.

\bibitem[Has05]{hassett_genus2}
Brendan Hassett.
\newblock Classical and minimal models of the moduli space of curves of genus
  two.
\newblock In {\em Geometric methods in algebra and number theory}, volume 235
  of {\em Progr. Math.}, pages 169--192. Birkh\"auser Boston, Boston, MA, 2005.

\bibitem[HH09]{HH1}
Brendan Hassett and Donghoon Hyeon.
\newblock Log canonical models for the moduli space of curves: the first
  divisorial contraction.
\newblock {\em Trans. Amer. Math. Soc.}, 361(8):4471--4489, 2009.

\bibitem[HH13]{HH2}
Brendan Hassett and Donghoon Hyeon.
\newblock Log minimal model program for the moduli space of stable curves: the
  first flip.
\newblock {\em Ann. of Math. (2)}, 177(3):911--968, 2013.

\bibitem[MS11]{morrison-swinarski}
Ian Morrison and David Swinarski.
\newblock Groebner techniques for low degree hilbert stability.
\newblock {\em Experimental Mathematics}, 20(1):34--56, 2011.

\bibitem[Sch91]{schubert}
David Schubert.
\newblock A new compactification of the moduli space of curves.
\newblock {\em Compositio Math.}, 78(3):297--313, 1991.

\bibitem[Voi02]{voisin-green-even}
Claire Voisin.
\newblock Green's generic syzygy conjecture for curves of even genus lying on a
  {$K3$} surface.
\newblock {\em J. Eur. Math. Soc. (JEMS)}, 4(4):363--404, 2002.

\bibitem[Voi05]{voisin-green-odd}
Claire Voisin.
\newblock Green's canonical syzygy conjecture for generic curves of odd genus.
\newblock {\em Compos. Math.}, 141(5):1163--1190, 2005.

\end{thebibliography}

\end{document}